\documentclass[preprint,11pt,3p]{elsarticle}
\usepackage{amsmath,amsthm,amsfonts,amssymb,arydshln}
\usepackage{pgfplots}
\pgfplotsset{compat=1.4}
\usepackage{xcolor}
\usepackage{algorithm}
\usepackage{multirow}
\usepackage{multicol}
\usepackage{booktabs}
\usepackage{algorithmic}
\usepackage{comment}
\usepackage{threeparttable}
\usepackage{tikz}
\usepackage{physics}
\usepackage{graphicx}
\usepackage{float}
\usepackage{enumitem}
\usetikzlibrary{arrows.meta, positioning}

\usepackage{mathrsfs}
\usepackage{sectsty}
\definecolor{astral}{RGB}{46,116,181}
\sectionfont{\color{astral}}
\newtheorem{theorem}{Theorem}[section]
\newtheorem{lemma}[theorem]{Lemma}
\newtheorem{corollary}[theorem]{Corollary}

\newtheorem{definition}[theorem]{Definition}
\newtheorem{example}[theorem]{Example}

\usepackage{hyperref}
\usepackage{subfigure}
\usepackage{scalerel}
\usepackage{stackengine,wasysym}

\usepackage{tikz}
\pgfdeclarelayer{bg} 
\pgfsetlayers{bg,main}

\usepackage{tikz,xcolor}
\definecolor{lime}{HTML}{A6CE39}
\definecolor{lightblue}{rgb}{0.0, 0.0, 0.5}
\definecolor{theoremblue}{RGB}{44, 130, 201}
\definecolor{directorange}{RGB}{255, 102, 0}
\DeclareRobustCommand{\orcidicon}{%
	\begin{tikzpicture}
	\draw[lime, fill=lime] (0,0)
	circle [radius=0.16]
	node[white] {{\fontfamily{qag}\selectfont \tiny ID}};
	\draw[white, fill=white] (-0.0625,0.095)
	circle [radius=0.007];
	\end{tikzpicture}
	\hspace{-2mm}
}
\foreach \x in {A, ..., Z}{%
	\expandafter\xdef\csname orcid\x\endcsname{\noexpand\href{https://orcid.org/\csname orcidauthor\x\endcsname}{\noexpand\orcidicon}}
}

\begin{document}
\begin{frontmatter}
\title{Additive results of the dual Moore--Penrose generalized inverse with an application to rigid-body motion estimation}
\author[a]{Yue Zhao}
\ead{yuezhao0303@163.com}

\author[b]{Daochang Zhang\corref{cor1}}
\ead{daochangzhang@126.com}
\cortext[cor1]{Corresponding author.}

\address[a]{Department of Math \& Stat, Georgia State University, Atlanta, GA 30303, USA}
\address[b]{College of Mathematics, Changchun Normal University, Changchun 130032, P.R. China.}

\begin{abstract}
We derive explicit formulas for the dual Moore--Penrose generalized inverse (DMPGI) of dual complex matrix sums under some conditions. We first express the DMPGI of a matrix partitioned into column blocks in terms of these blocks and associated projectors, and then derive two formulas for matrix sums.  Two examples illustrate the results. We further apply the formulas to rigid-body motion estimation with two observation points subject to a first-order perturbation of their positions that preserves the rank-deficient observation geometry. The DMPGI recovers the observable motion components from the velocity data.
\end{abstract}

\begin{keyword}
Dual complex matrix\sep
Dual Moore--Penrose generalized inverse\sep
Matrix sum\sep
Rigid-body motion estimation
\MSC[2020] 15A09; 15A24; 15B33; 70B10
\end{keyword}
\end{frontmatter}

\section{Introduction}

A dual matrix $\widehat A=A+\varepsilon A_0$, where $\varepsilon^2=0$, encodes a matrix and its first-order variation. Dual matrix methods have been used in rigid-body kinematics, the analysis of spatial mechanisms, and robotics \cite{Spatial,Rigidbody,Rigidbody2,Kinematic}.
In these applications, motion determination and mechanism synthesis often lead to systems of linear dual equations with rectangular or singular coefficient matrices. Generalized inverses are therefore useful for studying
the solvability of these systems and expressing their solutions \cite{DFPU2018,PVD2018,Udwadia2021}.

Research on dual generalized inverses has developed through both algebraic investigations and kinematic applications. Pennestr{\`i}, Valentini, and de Falco \cite{PVD2018} studied computational methods for dual generalized inverses and applied them to the synthesis of spatial linkages. De Falco, Pennestr{\`i}, and Udwadia \cite{DFPU2018} examined the Penrose conditions satisfied by dual generalized inverse formulas and considered applications to rigid-body kinematics. Udwadia \cite{Udwadia2021} further investigated the use of dual generalized inverses in linear dual equations, including least-squares and minimum-norm formulations. 

The dual Moore--Penrose generalized inverse (DMPGI) is defined by the four Penrose equations, but does not exist for every dual matrix \cite{DOall2020}. Its existence and representations have been studied
in \cite{defMP}, and its connection with the compact dual singular value decomposition has been examined in \cite{Brain}. Alternative extensions of the Moore--Penrose inverse have also been developed to include dual matrices that do not admit a DMPGI \cite{CUIQI2025}.
The present paper concerns the inverse defined by the four Penrose equations. 

Explicit block representations of the DMPGI are useful for solving linear dual equations with partitioned coefficient matrices. Cline \cite{SIAMTHM2} established representations for the generalized inverse of matrix sums in $\mathbb C$. We first derive a representation of
the DMPGI for column-partitioned dual complex matrices and then use it to extend the corresponding formulas for matrix sums under specified existence and invertibility conditions. We apply these formulas to rigid-body motion estimation with perturbed observation points, illustrating the recovery of observable motion components.

The paper is organized as follows.
Section~2 introduces the notation, reviews the existence condition of the DMPGI, and establishes auxiliary results. Section~3 develops a representation for $[\widehat U\ \widehat V]^\dagger$. This representation yields a formula for $(\widehat U\widehat U^*+\widehat V\widehat V^*)^\dagger$. Under the condition $\widehat U\widehat V^*=O$, we also obtain a formula for
$(\widehat U+\widehat V)^\dagger$. Section~4 illustrates these formulas with two explicit examples. Section~5 applies the formulas to rigid-body motion estimation from velocity measurements at two observation points with perturbed positions and explains why the angular velocity component along the line joining these points remains undetermined.

\section{Preliminaries and auxiliary results}
This section introduces the notation and preliminary results used in the sequel. We first recall the basic properties of dual complex matrices and the DMPGI, and then establish the auxiliary identities needed for the main results.
\subsection{Dual complex matrices and the DMPGI}
Throughout this paper, $\mathbb C$ denotes the complex field. We denote the identity matrix of order $n$ by $I_n$ and a zero matrix of appropriate dimensions by $O$. The symbols $(\cdot)^T$ and $(\cdot)^*$ denote transpose and conjugate transpose, respectively.

The algebra of dual numbers over $\mathbb C$ is
$\mathbb {DC}=
\{a+\varepsilon a_0:a,a_0\in\mathbb C\},$
where the dual unit $\varepsilon$ satisfies
$\varepsilon\neq 0$, $\varepsilon^2=0$, and commutes with complex scalars. For $\widehat a=a+\varepsilon a_0$, the coefficients $a$ and $a_0$ are called its standard and infinitesimal parts, respectively. Two dual complex numbers are equal if and only if their standard and infinitesimal parts are equal. Addition and multiplication are defined by
\[
\begin{aligned}
(a+\varepsilon a_0)+(b+\varepsilon b_0)
=(a+b)+\varepsilon(a_0+b_0),\qquad
(a+\varepsilon a_0)(b+\varepsilon b_0)
=ab+\varepsilon(ab_0+a_0b).
\end{aligned}
\]
Conjugation is defined componentwise by
$\widehat a^*=a^*+\varepsilon a_0^*$.
In particular, $\varepsilon^*=\varepsilon$.
The number $\widehat a$ is invertible if and only if
$a\neq 0$, in which case
\[
\widehat a^{-1}
=
\frac{1}{a}-\varepsilon\frac{a_0}{a^2}.
\]

Following \cite{MiaoHuang2023}, for a dual vector
$\widehat x=x+\varepsilon x_0\in\mathbb{DC}^n$,
the dual Euclidean norm is defined by
\[
\|\widehat x\|=
\begin{cases}
\displaystyle
\|x\|_2+\varepsilon
\frac{\operatorname{Re}(x^*x_0)}{\|x\|_2},
& x\neq 0,\\[2mm]
\varepsilon\|x_0\|_2,
& x=0,
\end{cases}
\]
where $\|\cdot\|_2$ denotes the ordinary Euclidean norm and $\operatorname{Re}(\cdot)$ denotes the real part of a complex number.

An $m\times n$ dual complex matrix is written as
\[
\widehat X=X+\varepsilon X_0,
\qquad
X,X_0\in\mathbb C^{m\times n},
\]
where $X$ and $X_0$ are called the standard and infinitesimal parts of $\widehat X$, respectively. The set of all $m\times n$ dual complex matrices is denoted by 
$\mathbb{DC}^{m\times n}$. A dual matrix $\widehat X$ is called appreciable if  $X\neq O$, and infinitesimal if $X=O$. Its range and null space are defined by
\[
\mathcal R(\widehat X)
=
\{\widehat X\widehat x:\widehat x\in\mathbb D^n\},
\qquad
\mathcal N(\widehat X)
=
\{\widehat x\in\mathbb D^n:\widehat X\widehat x=O\}.
\]

Subsequently, we recall several basic definitions and properties \cite{Brain}. The transpose and conjugate transpose of $\widehat X$ are
$\widehat X^T=X^T+\varepsilon X_0^T$ and
$\widehat X^*=X^*+\varepsilon X_0^*$, respectively. For matrices $\widehat X$ and $\widehat Y$ of compatible dimensions, $(\widehat X\widehat Y)^T=\widehat Y^T\widehat X^T$ and $(\widehat X\widehat Y)^*=\widehat Y^*\widehat X^*.$ A matrix $\widehat C=C+\varepsilon C_0\in\mathbb{DC}^{n\times n}$ is called invertible (or nonsingular) if there exists
$\widehat D\in\mathbb{DC}^{n\times n}$ such that
$\widehat C\widehat D=\widehat D\widehat C=I_n$.
The inverse is unique and is denoted by
$\widehat C^{-1}$. The matrix $\widehat C$ is invertible if and only if $C$ is invertible, in which case $\widehat C^{-1}=
C^{-1}-\varepsilon C^{-1}C_0C^{-1}.$ A dual matrix is said to have full column rank if its standard part has full column rank \cite{Wangfullrank}. Full row rank is defined analogously.

We next recall the definition of the DMPGI \cite{defMP}. Let
$\widehat G=G+\varepsilon G_0
\in\mathbb{DC}^{m\times n}$. A matrix $\widehat Y\in\mathbb{DC}^{n\times m}$ is called the DMPGI of $\widehat G$ if
\begin{equation}\label{defdmpgi}
\begin{aligned}
\widehat G\widehat Y\widehat G=\widehat G,\quad
\widehat Y\widehat G\widehat Y=\widehat Y,\quad
(\widehat G\widehat Y)^*=\widehat G\widehat Y,\quad
(\widehat Y\widehat G)^*=\widehat Y\widehat G.
\end{aligned}
\end{equation}
When it exists, the DMPGI is unique and is denoted by $\widehat G^\dagger$. Specifically, \(\widehat G^\dagger\) exists if and only if
\begin{align}\label{compatibility condition}
(I_m-GG^\dagger)G_0(I_n-G^\dagger G)=O,
\end{align}
where \(G^\dagger\) is the ordinary Moore–Penrose inverse of \(G\) \cite{defMP}. The compatibility condition \eqref{compatibility condition} is automatically satisfied when the standard part $G$ has full column or full row rank. Whenever this condition holds, the DMPGI is given by
\[
\widehat G^\dagger=G^\dagger-\varepsilon R,
\]
where
$
R=G^\dagger G_0G^\dagger
-(G^*G)^\dagger G_0^*(I_m-GG^\dagger)-(I_n-G^\dagger G)G_0^*(GG^*)^\dagger.
$

For convenience, we denote the class of dual complex matrices admitting a DMPGI by
\[
\mathbb{DC}_z^{m\times n}
=
\left\{
G+\varepsilon G_0\in\mathbb{DC}^{m\times n}
\;\middle|\;
(I_m-GG^\dagger)G_0(I_n-G^\dagger G)=O
\right\}.
\]
Equivalently, $\mathbb{DC}_z^{m\times n}$ comprises all $m\times n$ dual complex matrices for which the DMPGI exists.

The existence of the DMPGI is closely connected with the compact dual singular value decomposition. The following definition is distilled from the characterization of the compact dual singular value decomposition presented in \cite{Brain}.
\begin{definition}
Let $\widehat A\in \mathbb{DC}^{m\times n}$. A factorization of the form
$\widehat A=\widehat U\widehat\Sigma \widehat V^{*}$
is called a compact dual singular value decomposition (CDSVD) of $\widehat A$ if there exist $\widehat U\in \mathbb{DC}^{m\times r}$ and
$\widehat V\in \mathbb{DC}^{n\times r}$ both have orthonormal columns, and
$\widehat\Sigma\in \mathbb{DR}^{r\times r}$
is diagonal positive.
\end{definition}
The equivalence between the existence of the DMPGI and that of a compact dual singular value decomposition is established
in \cite[Theorem 5.2]{Brain}.
\begin{lemma}\label{MP-CDSVD-equivalent}
Let \(\widehat A=A+\varepsilon A_0\in\mathbb{DC}^{m\times n}\) with \(m\ge n\). Assume that $A=U\Sigma V^*$
is a compact SVD of \(A\), where \(U\in\mathbb{C}^{m\times r}\) and \(V\in\mathbb{C}^{n\times r}\) both have orthonormal columns, and \(\Sigma\in\mathbb{R}^{r\times r}\) is diagonal positive. Then the following statements are equivalent:
\begin{enumerate}
\item $(I_m-UU^*)A_0(I_n-VV^*)=O_{m\times n}$;
\item \(\widehat A\) admits a CDSVD;
\item The DMPGI \(\widehat A^\dagger\) of \(\widehat A\) exists, and
\begin{equation*}
\widehat A^\dagger=
V\Sigma^{-1}U^*
+\varepsilon\Big[
(I_n-VV^*)A_0^*U\Sigma^{-2}U^*
+V\Sigma^{-2}V^*A_0^*(I_m-UU^*)
- V\Sigma^{-1}U^*A_0V\Sigma^{-1}U^*
\Big].
\end{equation*}
\end{enumerate}
\end{lemma}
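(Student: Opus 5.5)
The plan is to prove (1)$\Leftrightarrow$(3) by reducing to the existence criterion \eqref{compatibility condition} and the formula $\widehat G^\dagger=G^\dagger-\varepsilon R$ recalled above, and then to prove (2)$\Rightarrow$(1) and (1)$\Rightarrow$(2) separately. The whole argument rests on the identities coming from the compact SVD $A=U\Sigma V^*$:
\[
A^\dagger=V\Sigma^{-1}U^*,\quad AA^\dagger=UU^*,\quad A^\dagger A=VV^*,\quad (A^*A)^\dagger=V\Sigma^{-2}V^*,\quad (AA^*)^\dagger=U\Sigma^{-2}U^*.
\]

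\textbf{(1)$\Leftrightarrow$(3).} By these identities, statement (1) is exactly \eqref{compatibility condition} with $G=A$ and $G_0=A_0$. Hence (1) holds if and only if $\widehat A^\dagger$ exists. For the formula, I substitute the identities into
\[
R=A^\dagger A_0A^\dagger-(A^*A)^\dagger A_0^*(I_m-AA^\dagger)-(I_n-A^\dagger A)A_0^*(AA^*)^\dagger .
\]
Then $-R$ equals
\[
-V\Sigma^{-1}U^*A_0V\Sigma^{-1}U^*+V\Sigma^{-2}V^*A_0^*(I_m-UU^*)+(I_n-VV^*)A_0^*U\Sigma^{-2}U^*,
\]
which is precisely the infinitesimal part in (3). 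So (3) is just the recalled formula rewritten.

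\textbf{(2)$\Rightarrow$(1).} Suppose $\widehat A=\widehat U_1\widehat\Sigma_1\widehat V_1^*$ is a CDSVD, with $\widehat U_1=U_1+\varepsilon U_{10}$ and similarly for the other factors. The standard parts give a compact SVD $A=U_1\Sigma_1V_1^*$ with $U_1,V_1$ orthonormal, so the rank equals $r$ and $U_1U_1^*=UU^*$, $V_1V_1^*=VV^*$ (both are the projectors onto $\mathcal R(A)$ and $\mathcal R(A^*)$). The infinitesimal part is
\[
A_0=U_{10}\Sigma_1V_1^*+U_1\Sigma_{10}V_1^*+U_1\Sigma_1V_{10}^*.
\]
Each term is annihilated either on the right by $I_n-VV^*$ or on the left by $I_m-UU^*$, which gives (1).

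\textbf{(1)$\Rightarrow$(2).} This is a construction, and it is where I expect the real work to be. Given (1), split
\[
A_0=UU^*A_0VV^*+(I_m-UU^*)A_0VV^*+UU^*A_0(I_n-VV^*).
\]
I would try
\[
\widehat U=U+\varepsilon\bigl[(I_m-UU^*)A_0V\Sigma^{-1}+UK\bigr],\qquad
\widehat V=V+\varepsilon\bigl[(I_n-VV^*)A_0^*U\Sigma^{-1}+VL\bigr],\qquad
\widehat\Sigma=\Sigma+\varepsilon\Sigma_0 .
\]
Here $K,L$ are skew-Hermitian, which makes the columns of $\widehat U,\widehat V$ orthonormal to first order, and $\Sigma_0$ is real diagonal. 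Expanding, the two off-diagonal blocks of $A_0$ are matched automatically. What remains is the $r\times r$ equation
\[
U^*A_0V=K\Sigma+\Sigma_0+\Sigma L^* .
\]

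\textbf{Main obstacle.} The difficulty is solving this last equation. For an entry $(i,j)$ with $\sigma_i\ne\sigma_j$, the equations at $(i,j)$ and $(j,i)$ determine $k_{ij}$ and $l_{ij}$ uniquely, since the $2\times2$ system is nonsingular when $\sigma_i^2\ne\sigma_j^2$. On the diagonal, the real part defines $(\Sigma_0)_{ii}$ and the imaginary part is absorbed by $k_{ii}$. The problem is a cluster of equal singular values $\sigma$. There the block equation reads $M=\sigma(K+L^*)+\Sigma_0$, which forces the Hermitian part of $M$ to be diagonal. I would handle this by first rechoosing $U,V$ within each cluster: replace $U_{\text{blk}}\mapsto U_{\text{blk}}W$ and $V_{\text{blk}}\mapsto V_{\text{blk}}W$ with $W$ unitary chosen to diagonalize the Hermitian part of that diagonal block of $U^*A_0V$. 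This leaves $A=U\Sigma V^*$ unchanged, and after it the block equation is solvable with $K=L$ skew-Hermitian on the cluster. Finally, the positivity of $\widehat\Sigma$ is automatic because $\Sigma>0$.
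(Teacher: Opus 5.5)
The paper does not prove this lemma. It imports it from \cite[Theorem~5.2]{Brain}, and the only ingredients it records itself are the existence criterion \eqref{compatibility condition} and the formula $\widehat G^\dagger=G^\dagger-\varepsilon R$. Your proposal therefore has to stand on its own, and it essentially does.
\begin{enumerate}
\item The reduction (1)$\Leftrightarrow$(3) via $AA^\dagger=UU^*$, $A^\dagger A=VV^*$, $(A^*A)^\dagger=V\Sigma^{-2}V^*$ and $(AA^*)^\dagger=U\Sigma^{-2}U^*$ is correct, and $-R$ is exactly the bracket in (3).
\item Your argument for (2)$\Rightarrow$(1) is correct.
\item For (1)$\Rightarrow$(2), your ansatz does reduce everything to the equation $U^*A_0V=K\Sigma+\Sigma_0+\Sigma L^*$. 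The $2\times2$ systems with determinant $\sigma_j^2-\sigma_i^2$ handle index pairs lying in different clusters. The simultaneous rotation $U_{\mathrm{blk}}\mapsto U_{\mathrm{blk}}W$, $V_{\mathrm{blk}}\mapsto V_{\mathrm{blk}}W$ inside each cluster is legitimate. It leaves $A$, $UU^*$ and $VV^*$ unchanged, and (2) does not require the CDSVD to have the originally given $U,V$ as standard parts. In your construction the dual singular values on a cluster come out as $\sigma+\varepsilon\lambda$, with $\lambda$ running over the eigenvalues of the Hermitian part of the corresponding diagonal block of $U^*A_0V$.
\end{enumerate}

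Two points need fixing.

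First, your final choice on a cluster is wrong as written. If $K=L$ is skew-Hermitian, then $K+L^*=K-K=O$, so the block equation would force $M_{\mathrm{blk}}$ itself to be real diagonal, not just its Hermitian part. Instead take, for example, $L_{\mathrm{blk}}=O$ and $K_{\mathrm{blk}}=\frac{1}{2\sigma}(M_{\mathrm{blk}}-M_{\mathrm{blk}}^*)$, with $(\Sigma_0)_{\mathrm{blk}}=\frac12(M_{\mathrm{blk}}+M_{\mathrm{blk}}^*)$. The latter is real diagonal after the rotation. Alternatively, take $K=-L$ with half of that $K$. Either choice is consistent with your rule $k_{ii}=\mathrm{i}\,\mathrm{Im}(m_{ii})/\sigma_i$ on the diagonal.

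Second, in (2)$\Rightarrow$(1) you use that the standard part $\Sigma_1$ of $\widehat\Sigma_1$ has positive diagonal, so that $U_1\Sigma_1V_1^*$ is a compact SVD of $A$. State explicitly that you read ``diagonal positive'' as ``positive standard parts''. This is also how the paper uses it when it inverts $\widehat\Sigma$ in the proof of Lemma~\ref{1.4}. Under the common convention that also calls $a+\varepsilon a_0$ with $a=0$, $a_0>0$ positive, the implication fails. For instance, $\widehat A=\varepsilon\in\mathbb{DC}^{1\times1}$ factors as $1\cdot\varepsilon\cdot1$ but has no DMPGI.
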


To establish the corresponding result for dual complex matrices, we recall the following classical result over $\mathbb C$.
\begin{lemma}\label{completion-orthonormal-columns}\cite{GL2013}
If $U\in\mathbb{C}^{m\times n}$ with $m>n$ has orthonormal columns, then there exists
$U_{\perp}\in\mathbb{C}^{m\times(m-n)}$ such that $[\,U\ \ U_{\perp}\,]
\in\mathbb{C}^{m\times m}$
is unitary.
\end{lemma}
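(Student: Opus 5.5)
The statement is the classical completion result (Lemma~\ref{completion-orthonormal-columns}), so I will prove it by extending the columns of $U$ to an orthonormal basis of $\mathbb C^m$. Write $U=[u_1,\dots,u_n]$. Orthonormality, $U^*U=I_n$, forces $u_1,\dots,u_n$ to be linearly independent: if $\sum_j c_ju_j=0$, then taking the inner product with $u_k$ gives $c_k=0$. Hence $\mathcal S=\mathcal R(U)$ has dimension $n$.

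Its orthogonal complement
\[
\mathcal S^\perp=\mathcal N(U^*)=\{x\in\mathbb C^m: U^*x=0\}
\]
satisfies $\mathcal S\oplus\mathcal S^\perp=\mathbb C^m$, so $\dim\mathcal S^\perp=m-n\ge 1$ because $m>n$. Concretely, rank--nullity applied to the $n\times m$ matrix $U^*$, which has rank $n$, gives $\dim\mathcal N(U^*)=m-n$.

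Next, I choose any basis of $\mathcal S^\perp$ and apply the Gram--Schmidt process to it. This yields orthonormal vectors $w_1,\dots,w_{m-n}\in\mathcal S^\perp$, and I set $U_\perp=[w_1,\dots,w_{m-n}]\in\mathbb C^{m\times(m-n)}$.

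Finally, I verify the claim by a block computation. With $W=[\,U\ \ U_\perp\,]$,
\[
W^*W=\begin{bmatrix}U^*U & U^*U_\perp\\ U_\perp^*U & U_\perp^*U_\perp\end{bmatrix}
=\begin{bmatrix}I_n & O\\ O & I_{m-n}\end{bmatrix},
\]
since $U^*U=I_n$ by hypothesis, $U^*U_\perp=O$ because each $w_i\in\mathcal N(U^*)$, and $U_\perp^*U_\perp=I_{m-n}$ by construction. A square matrix with $W^*W=I_m$ also satisfies $WW^*=I_m$, because a one-sided inverse of a square matrix is two-sided. Therefore $W$ is unitary.

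No step here is a real obstacle. The only point needing care is the dimension count $\dim\mathcal S^\perp=m-n$, which rests on the linear independence of the columns of $U$. An alternative route is to extend $u_1,\dots,u_n$ by vectors from the standard basis to a basis of $\mathbb C^m$ and then run Gram--Schmidt on the whole list. Gram--Schmidt leaves the already orthonormal $u_j$ unchanged, so the last $m-n$ vectors it produces form $U_\perp$.
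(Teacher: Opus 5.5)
Your proof is correct and complete. The paper gives no proof of this lemma; it only cites Golub and Van Loan, who record it as a standard fact, so there is no argument in the paper to compare against, and yours is the standard proof the citation stands in for. The dimension count $\dim\mathcal N(U^*)=m-n$, the Gram--Schmidt orthonormalization inside $\mathcal N(U^*)$, the block check of $W^*W=I_m$, and the remark that a one-sided inverse of a square matrix is two-sided are all sound. Your alternative via basis extension also works, because Gram--Schmidt leaves the already orthonormal $u_j$ unchanged.

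Golub and Van Loan also note that the fact follows from the full QR factorization, which is how one would compute $U_\perp$ in practice with Householder reflections. Write $U=Q\begin{bmatrix}R_1\\ O\end{bmatrix}$ with $Q=[\,Q_1\ \ Q_2\,]$ unitary. Then $U^*Q_2=R_1^*Q_1^*Q_2=O$, so $U_\perp=Q_2$ works directly, with no separate dimension count or orthonormalization step.
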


\subsection{Auxiliary results}
The following lemma collects basic properties of the DMPGI that can be proved using the four Penrose equations in \eqref{defdmpgi}.

\begin{lemma}\label{lem:DMPGI-properties}
Let $\widehat H\in\mathbb{DC}_z^{n\times n}$ and
$\widehat A\in\mathbb{DC}^{m\times n}$. Then the following statements hold:
\begin{itemize}
    \item[$(i)$] $\widehat H$ is Hermitian if and only if
    $\widehat H^\dagger$ is Hermitian.

    \item[$(ii)$] If $\widehat A^\dagger$ exists, then
    $(\widehat A\widehat A^*)^\dagger
    =(\widehat A^\dagger)^*\widehat A^\dagger$ and $(\widehat A^*\widehat A)^\dagger
    =\widehat A^\dagger(\widehat A^\dagger)^*.$

    \item[$(iii)$] If $\widehat A^\dagger$ exists, then
    $
    (\widehat A^\dagger)^*
    =(\widehat A^*)^\dagger.
    $
\end{itemize}
\end{lemma}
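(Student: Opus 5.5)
The plan is to verify each claim directly from the four Penrose equations in \eqref{defdmpgi}, using uniqueness of the DMPGI. The identities $(\widehat X\widehat Y)^*=\widehat Y^*\widehat X^*$ and $(\widehat X^*)^*=\widehat X$ hold for dual matrices, since conjugation acts componentwise and $\varepsilon^*=\varepsilon$. These two facts are all the algebra we need. We prove $(iii)$ first, since $(i)$ and $(ii)$ both reduce to it.

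For $(iii)$, set $\widehat Y=(\widehat A^\dagger)^*$ and check the four equations for $\widehat A^*$. Taking conjugate transposes of $\widehat A\widehat A^\dagger\widehat A=\widehat A$ gives $\widehat A^*\widehat Y\widehat A^*=\widehat A^*$. Likewise, conjugating $\widehat A^\dagger\widehat A\widehat A^\dagger=\widehat A^\dagger$ gives $\widehat Y\widehat A^*\widehat Y=\widehat Y$. For the symmetry conditions, $\widehat A^*\widehat Y=(\widehat A^\dagger\widehat A)^*=\widehat A^\dagger\widehat A$, which is Hermitian. Similarly, $\widehat Y\widehat A^*=(\widehat A\widehat A^\dagger)^*=\widehat A\widehat A^\dagger$ is Hermitian. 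So $\widehat Y$ satisfies all four equations, and by uniqueness $(\widehat A^*)^\dagger$ exists and equals $(\widehat A^\dagger)^*$.

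For $(i)$, if $\widehat H=\widehat H^*$, then $(iii)$ gives $(\widehat H^\dagger)^*=(\widehat H^*)^\dagger=\widehat H^\dagger$. Conversely, suppose $\widehat H^\dagger$ is Hermitian. The four Penrose equations are symmetric in the roles of $\widehat G$ and $\widehat Y$, so $\widehat H$ is the DMPGI of $\widehat H^\dagger$, that is, $(\widehat H^\dagger)^\dagger=\widehat H$. Applying the forward direction to the Hermitian matrix $\widehat H^\dagger$ shows that $\widehat H$ is Hermitian.

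For $(ii)$, take the candidate $\widehat Z=(\widehat A^\dagger)^*\widehat A^\dagger$ for $\widehat G=\widehat A\widehat A^*$ and use the identities coming from $(iii)$. First, $\widehat A^*(\widehat A^\dagger)^*=(\widehat A^\dagger\widehat A)^*=\widehat A^\dagger\widehat A$. Hence
\[
\widehat G\widehat Z=\widehat A(\widehat A^\dagger\widehat A)\widehat A^\dagger=\widehat A\widehat A^\dagger,
\]
which is Hermitian. Next, $\widehat Z\widehat G=(\widehat A^\dagger)^*\widehat A^\dagger\widehat A\widehat A^*$. Since $\widehat A^\dagger\widehat A$ is Hermitian, $\widehat A^\dagger\widehat A\widehat A^*=(\widehat A\widehat A^\dagger\widehat A)^*=\widehat A^*$, so $\widehat Z\widehat G=(\widehat A^\dagger)^*\widehat A^*=(\widehat A\widehat A^\dagger)^*=\widehat A\widehat A^\dagger$, also Hermitian. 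The remaining two equations follow: $\widehat G\widehat Z\widehat G=\widehat A\widehat A^\dagger\widehat A\widehat A^*=\widehat G$, and $\widehat Z\widehat G\widehat Z=(\widehat A^\dagger)^*\widehat A^\dagger\widehat A\widehat A^\dagger=\widehat Z$. The second formula in $(ii)$ follows by applying the first to $\widehat A^*$, which has a DMPGI by $(iii)$, and using $((\widehat A^*)^\dagger)^*=\widehat A^\dagger$.

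The only subtle point is existence. The existence of $(\widehat A^*)^\dagger$, $(\widehat A\widehat A^*)^\dagger$ and $(\widehat H^\dagger)^\dagger$ is not assumed in the statement. It must come from exhibiting an explicit solution of \eqref{defdmpgi}, rather than from the compatibility condition \eqref{compatibility condition}. The verification-by-construction approach above handles this automatically. Beyond that, the steps are routine bookkeeping of adjoints, which is valid since dual matrix multiplication is associative and conjugation reverses products.
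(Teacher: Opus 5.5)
Your proposal is correct and takes essentially the paper's approach. The paper gives no detailed proof and only remarks that the lemma follows from the four Penrose equations; you carry this out by verifying those equations and invoking uniqueness, proving $(iii)$ first and deducing $(i)$ and $(ii)$ from it, which also supplies the existence of $(\widehat A^*)^\dagger$, $(\widehat A\widehat A^*)^\dagger$ and $(\widehat A^*\widehat A)^\dagger$ by construction.
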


We next establish a unitary completion result
for dual complex matrices.

\begin{lemma}\label{extend unitary}
If $\widehat U\in \mathbb{DC}^{m\times n}\ (m>n)$ has orthonormal columns, then there exists
$\widehat U_{\perp}\in \mathbb{DC}^{m\times (m-n)}$ such that
$\big[\widehat U\ \ \widehat U_{\perp}\big]\in \mathbb{DC}^{m\times m}$ is dual unitary.
\end{lemma}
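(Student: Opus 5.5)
Write $\widehat U=U+\varepsilon U_0$. The hypothesis $\widehat U^*\widehat U=I_n$ splits into its standard and infinitesimal parts:
\[
U^*U=I_n,\qquad U^*U_0+U_0^*U=O .
\]
So $U$ has orthonormal columns in the classical sense. By Lemma~\ref{completion-orthonormal-columns} we choose $U_\perp\in\mathbb C^{m\times(m-n)}$ such that $Q=[\,U\ \ U_\perp\,]$ is unitary. We then look for the completion in the form
\[
\widehat U_\perp=U_\perp+\varepsilon W,
\]
with $W\in\mathbb C^{m\times(m-n)}$ still to be determined.

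Set $\widehat Q=[\,\widehat U\ \ \widehat U_\perp\,]=Q+\varepsilon[\,U_0\ \ W\,]$. Since $\widehat Q$ is square, it suffices to show $\widehat Q^*\widehat Q=I_m$. Indeed, this gives $\widehat Q^*$ as a one-sided inverse of the square dual matrix $\widehat Q$, whose standard part $Q$ is invertible. Hence $\widehat Q$ is invertible and $\widehat Q^{-1}=\widehat Q^*$, so also $\widehat Q\widehat Q^*=I_m$.

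Because $Q^*Q=I_m$, the condition $\widehat Q^*\widehat Q=I_m$ reduces to the infinitesimal equation
\[
Q^*[\,U_0\ \ W\,]+[\,U_0\ \ W\,]^*Q=O .
\]
Equivalently, $S:=Q^*[\,U_0\ \ W\,]$ must be skew-Hermitian. We write $S$ in blocks:
\[
S=\begin{bmatrix}U^*U_0 & U^*W\\ U_\perp^*U_0 & U_\perp^*W\end{bmatrix}.
\]
The $(1,1)$ block $U^*U_0$ is already skew-Hermitian by the hypothesis. The $(2,2)$ block becomes skew-Hermitian if we impose $U_\perp^*W=O$. The off-diagonal blocks require
\[
U^*W=-(U_\perp^*U_0)^*=-U_0^*U_\perp .
\]

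The key step is to solve these two linear conditions for $W$. Since $Q$ is unitary, $W=Q(Q^*W)$, so $W$ is determined by prescribing $Q^*W$. We therefore set
\[
Q^*W=\begin{bmatrix}-U_0^*U_\perp\\ O\end{bmatrix},
\qquad\text{that is,}\qquad
W=-UU_0^*U_\perp .
\]

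It remains to verify this choice. We have $U^*W=-U_0^*U_\perp$ because $U^*U=I_n$, and $U_\perp^*W=O$ because $U_\perp^*U=O$. Hence $S$ is skew-Hermitian, and $\widehat U_\perp=U_\perp-\varepsilon UU_0^*U_\perp$ gives the required dual unitary completion.

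The only delicate point is the remark that a one-sided dual inverse of a square dual matrix is two-sided. It follows from the invertibility criterion recalled earlier: $\widehat Q$ is invertible because $Q$ is, and then the left inverse $\widehat Q^*$ must equal $\widehat Q^{-1}$. Alternatively, one can check $\widehat Q\widehat Q^*=I_m$ directly. The infinitesimal part of $\widehat Q\widehat Q^*$ is $Q(S+S^*)Q^*$, which vanishes because $S+S^*=O$.
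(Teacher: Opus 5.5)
Your proof is correct and follows the same approach as the paper: you complete $U$ classically to a unitary $Q=[\,U\ \ U_\perp\,]$, take the same infinitesimal correction $W=-UU_0^*U_\perp$, and verify $Q^*E+E^*Q=O$ block by block. Your extra remark that $\widehat Q\widehat Q^*=I_m$ also holds, via $Q(S+S^*)Q^*=O$ or via invertibility of $\widehat Q$, addresses a point the paper's proof leaves implicit.
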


\begin{proof}
Since $\widehat U $ has orthonormal columns,  we have $\widehat U^{*}\widehat U=I_n$, $U^{*}U=I_n,$ and $U^{*}U_0+U_0^{*}U=O_n.$
Hence $U$ has orthonormal columns in $\mathbb C^{m\times n}$. By Lemma \ref{completion-orthonormal-columns}, there exists $U_{\perp}\in\mathbb C^{m\times(m-n)}$ such that
$Q:=[\,U\ \ U_{\perp}\,]\in\mathbb C^{m\times m}$
is unitary. In particular,
\[
U^{*}U_{\perp}=O,\qquad
U_{\perp}^{*}U_{\perp}=I_{m-n}.
\]
We choose
$U_{0,\perp}:=-\,U\big(U_0^{*}U_{\perp}\big)
\in\mathbb C^{m\times(m-n)},$
and define
$\widehat U_{\perp}:=U_{\perp}+\varepsilon U_{0,\perp}.$
We claim that
$
\widehat Q=[\,\widehat U\ \ \widehat U_{\perp}\,]
$
is dual unitary. Indeed,
\[
\widehat Q
=
Q+\varepsilon E,
\qquad
E:=[\,U_0\ \ U_{0,\perp}\,].
\]
Since $Q$ is unitary, it suffices to verify that
$Q^{*}E+E^{*}Q=O_m$.
By block multiplication,
\[
Q^{*}E+E^{*}Q
=
\begin{bmatrix}
U^{*}U_0+U_0^{*}U &
U^{*}U_{0,\perp}+U_0^{*}U_{\perp}\\[1mm]
U_{\perp}^{*}U_0+U_{0,\perp}^{*}U &
U_{\perp}^{*}U_{0,\perp}+U_{0,\perp}^{*}U_{\perp}
\end{bmatrix}.
\]
The $(1,1)$-block is zero by $U^{*}U_0+U_0^{*}U=O_n.$
Moreover, from the definition of $U_{0,\perp}$,
\[
U^{*}U_{0,\perp}
=
-\,U^{*}U\big(U_0^{*}U_{\perp}\big)
=
-\,U_0^{*}U_{\perp},
\]
and hence
$U^{*}U_{0,\perp}+U_0^{*}U_{\perp}=O.$
Taking conjugate transposes gives
$U_{\perp}^{*}U_0+U_{0,\perp}^{*}U=O.$
Furthermore, since $U^{*}U_{\perp}=O$, we have
\[
U_{\perp}^{*}U_{0,\perp}
=
-\,U_{\perp}^{*}U\big(U_0^{*}U_{\perp}\big)
=
O,
\]
and thus
$U_{\perp}^{*}U_{0,\perp}+U_{0,\perp}^{*}U_{\perp}=O.$
Therefore, $Q^{*}E+E^{*}Q=O_m.$
It follows that
\[
\widehat Q^{*}\widehat Q
=
(Q+\varepsilon E)^{*}(Q+\varepsilon E)
=
Q^{*}Q+\varepsilon\big(Q^{*}E+E^{*}Q\big)
=
I_m.
\]
Therefore, $\widehat Q$ is dual unitary. The proof is complete.
\end{proof}
The DMPGI of $\widehat X$ admits the following
representations in terms of the DMPGIs of
$\widehat X\widehat X^*$ and $\widehat X^*\widehat X$.
\begin{lemma}\label{1.4}
Let $\widehat X\in \mathbb{DC}_z^{m\times n}$. Then
$\widehat X^\dagger=\widehat X^*(\widehat X\widehat X^*)^\dagger
=(\widehat X^*\widehat X)^\dagger \widehat X^*.$
\end{lemma}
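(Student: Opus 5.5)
Since $\widehat X\in\mathbb{DC}_z^{m\times n}$, the DMPGI $\widehat X^\dagger$ exists. By Lemma~\ref{lem:DMPGI-properties}(ii), both $(\widehat X\widehat X^*)^\dagger$ and $(\widehat X^*\widehat X)^\dagger$ then exist and are given by
\[
(\widehat X\widehat X^*)^\dagger=(\widehat X^\dagger)^*\widehat X^\dagger,\qquad (\widehat X^*\widehat X)^\dagger=\widehat X^\dagger(\widehat X^\dagger)^*.
\]
So the plan is to substitute these expressions into the two claimed representations and reduce each product to $\widehat X^\dagger$ using only the four Penrose equations \eqref{defdmpgi}. This avoids any appeal to the explicit formula for $\widehat G^\dagger$ or to the compatibility condition.

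For the first identity, write $\widehat X^*(\widehat X\widehat X^*)^\dagger=\widehat X^*(\widehat X^\dagger)^*\widehat X^\dagger=(\widehat X^\dagger\widehat X)^*\widehat X^\dagger$. The fourth Penrose equation $(\widehat X^\dagger\widehat X)^*=\widehat X^\dagger\widehat X$ turns this into $\widehat X^\dagger\widehat X\widehat X^\dagger$, which equals $\widehat X^\dagger$ by the second Penrose equation.

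The second identity is symmetric. Write $(\widehat X^*\widehat X)^\dagger\widehat X^*=\widehat X^\dagger(\widehat X^\dagger)^*\widehat X^*=\widehat X^\dagger(\widehat X\widehat X^\dagger)^*$. The third Penrose equation gives $\widehat X^\dagger\widehat X\widehat X^\dagger=\widehat X^\dagger$.

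The only facts needed about dual matrices are that $(\widehat A\widehat B)^*=\widehat B^*\widehat A^*$ and associativity. Both were recalled in the preliminaries and hold because $\varepsilon$ commutes with complex scalars and $\varepsilon^*=\varepsilon$.

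The one point needing care is that Lemma~\ref{lem:DMPGI-properties}(ii) is invoked rather than reproved. If the reader wants self-containment, I would verify directly that $\widehat Z=(\widehat X^\dagger)^*\widehat X^\dagger$ satisfies the four Penrose equations for $\widehat H=\widehat X\widehat X^*$. For instance,
\[
\widehat H\widehat Z=\widehat X\widehat X^*(\widehat X^\dagger)^*\widehat X^\dagger=\widehat X(\widehat X^\dagger\widehat X)^*\widehat X^\dagger=\widehat X\widehat X^\dagger\widehat X\widehat X^\dagger=\widehat X\widehat X^\dagger,
\]
which is Hermitian. The remaining equations follow the same way. Uniqueness of the DMPGI then identifies $\widehat Z$ with $(\widehat X\widehat X^*)^\dagger$, and the analogous check handles $\widehat X^*\widehat X$. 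I expect no genuine obstacle beyond this bookkeeping.
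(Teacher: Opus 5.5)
Your argument is correct, and it takes a genuinely different route from the paper.

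\textbf{The paper's route.} The paper works through the compact dual SVD. It invokes Lemma~\ref{MP-CDSVD-equivalent} to write $\widehat X=\widehat U_1\widehat\Sigma\widehat V_1^*$ and uses Lemma~\ref{extend unitary} to complete $\widehat U_1,\widehat V_1$ to dual unitary matrices. It then writes $\widehat X^\dagger$, $(\widehat X\widehat X^*)^\dagger$ and $(\widehat X^*\widehat X)^\dagger$ explicitly in block-diagonal form. Finally it multiplies out, using $\widehat\Sigma^*(\widehat\Sigma\widehat\Sigma^*)^{-1}=\widehat\Sigma^{-1}$.

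\textbf{Your route.} Your proof is purely equational. Once $(\widehat X\widehat X^*)^\dagger=(\widehat X^\dagger)^*\widehat X^\dagger$ is available, the fourth (resp.\ third) Penrose equation collapses each product to $\widehat X^\dagger\widehat X\widehat X^\dagger=\widehat X^\dagger$. Citing Lemma~\ref{lem:DMPGI-properties}(ii) is not circular, since the paper states it earlier, though without proof. Your direct Penrose check of $\widehat Z=(\widehat X^\dagger)^*\widehat X^\dagger$ supplies that missing proof, and the equations you left implicit do go through. For example, $\widehat X^\dagger\widehat X\widehat X^*=\widehat X^*$ gives $\widehat Z\widehat H=(\widehat X\widehat X^\dagger)^*=\widehat X\widehat X^\dagger$. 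Likewise, $\widehat Z\widehat H\widehat Z=(\widehat X^\dagger\widehat X\widehat X^\dagger)^*\widehat X^\dagger=\widehat Z$.

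\textbf{What each buys.} Your approach needs only associativity, the reversal rule $(\widehat A\widehat B)^*=\widehat B^*\widehat A^*$, and uniqueness of the DMPGI. So it holds verbatim in any ring with involution. It also avoids both the external CDSVD equivalence theorem and the dimension restrictions the paper's route must handle tacitly ($m\ge n$ in Lemma~\ref{MP-CDSVD-equivalent}, $m>n$ in Lemma~\ref{extend unitary}). The paper's route, in exchange, gives explicit formulas for all three inverses in terms of the CDSVD factors. That shows directly why $\widehat X\widehat X^*$ and $\widehat X^*\widehat X$ lie in $\mathbb{DC}_z$.
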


\begin{proof}
Since $\widehat X\in\mathbb{DC}_z^{m\times n}$, Lemma \ref{MP-CDSVD-equivalent} yields $\widehat X$ admits a CDSVD. Then there exist
$\widehat U_1\in\mathbb{DC}^{m\times r}$ and $\widehat V_1\in\mathbb{DC}^{n\times r},$
with orthonormal columns, and a diagonal positive dual matrix $\widehat\Sigma$ such that
\[
\widehat X=\widehat U_1\widehat\Sigma \widehat V_1^*.
\]
By Lemma \ref{extend unitary}, $\widehat U_1$ and $\widehat V_1$ can be extended to dual unitary matrices
\[
\widehat U=[\,\widehat U_1\ \ \widehat U_2\,]\in\mathbb{DC}^{m\times m},
\qquad
\widehat V=[\,\widehat V_1\ \ \widehat V_2\,]\in\mathbb{DC}^{n\times n}.
\]
Therefore,
$
\widehat X
=
\widehat U
\begin{bmatrix}
\widehat\Sigma & 0\\
0 & 0
\end{bmatrix}
\widehat V^*.$ Obviously, the DMPGI of $\widehat X$ exists and has the form $$\widehat X^\dagger
=
\widehat V
\begin{bmatrix}
\widehat\Sigma^{-1} & 0\\
0 & 0
\end{bmatrix}
\widehat U^*.
$$
Moreover,
\[
\widehat X\widehat X^*
=
\widehat U
\begin{bmatrix}
\widehat\Sigma & 0\\
0 & 0
\end{bmatrix}
\widehat V^*\widehat V
\begin{bmatrix}
\widehat\Sigma^* & 0\\
0 & 0
\end{bmatrix}
\widehat U^*
=
\widehat U
\begin{bmatrix}
\widehat\Sigma\widehat\Sigma^* & 0\\
0 & 0
\end{bmatrix}
\widehat U^*,
\]
and similarly,
\[
\widehat X^*\widehat X
=
\widehat V
\begin{bmatrix}
\widehat\Sigma^*\widehat\Sigma & 0\\
0 & 0
\end{bmatrix}
\widehat V^*.
\]
Hence $\widehat X\widehat X^*\in\mathbb{DC}_z^{m\times m}$ and
$\widehat X^*\widehat X\in\mathbb{DC}_z^{n\times n}$, with
\[
(\widehat X\widehat X^*)^\dagger
=
\widehat U
\begin{bmatrix}
(\widehat\Sigma\widehat\Sigma^*)^{-1} & 0\\
0 & 0
\end{bmatrix}
\widehat U^*
\qquad \text{and} \qquad
(\widehat X^*\widehat X)^\dagger
=
\widehat V
\begin{bmatrix}
(\widehat\Sigma^*\widehat\Sigma)^{-1} & 0\\
0 & 0
\end{bmatrix}
\widehat V^*.
\]
Furthermore,
\[
\begin{aligned}
\widehat X^*(\widehat X\widehat X^*)^\dagger
&=
\widehat V
\begin{bmatrix}
\widehat\Sigma^* & 0\\
0 & 0
\end{bmatrix}
\widehat U^*
\widehat U
\begin{bmatrix}
(\widehat\Sigma\widehat\Sigma^*)^{-1} & 0\\
0 & 0
\end{bmatrix}
\widehat U^* =
\widehat V
\begin{bmatrix}
\widehat\Sigma^*(\widehat\Sigma\widehat\Sigma^*)^{-1} & 0\\
0 & 0
\end{bmatrix}
\widehat U^*.
\end{aligned}
\]
Since $\widehat\Sigma$ is an invertible diagonal matrix, we have
$\widehat\Sigma^*(\widehat\Sigma\widehat\Sigma^*)^{-1}
=\widehat\Sigma^*(\widehat\Sigma^*)^{-1}\widehat\Sigma^{-1}=\widehat\Sigma^{-1}.
$
Therefore,
\[
\widehat X^*(\widehat X\widehat X^*)^\dagger
=
\widehat V
\begin{bmatrix}
\widehat\Sigma^{-1} & 0\\
0 & 0
\end{bmatrix}
\widehat U^*
=
\widehat X^\dagger.
\]
Analogously,
\[
\begin{aligned}
(\widehat X^*\widehat X)^\dagger \widehat X^*
&=
\widehat V
\begin{bmatrix}
(\widehat\Sigma^*\widehat\Sigma)^{-1} & 0\\
0 & 0
\end{bmatrix}
\widehat V^*
\widehat V
\begin{bmatrix}
\widehat\Sigma^* & 0\\
0 & 0
\end{bmatrix}
\widehat U^*=
\widehat V
\begin{bmatrix}
(\widehat\Sigma^*\widehat\Sigma)^{-1}\widehat\Sigma^* & 0\\
0 & 0
\end{bmatrix}
\widehat U^*=\widehat V
\begin{bmatrix}
\widehat\Sigma^{-1} & 0\\
0 & 0
\end{bmatrix}
\widehat U^*
=
\widehat X^\dagger.
\end{aligned}
\]
Consequently,
$\widehat X^\dagger=\widehat X^*(\widehat X\widehat X^*)^\dagger
=(\widehat X^*\widehat X)^\dagger \widehat X^*.$ This completes the proof.
\end{proof}

We next introduce several basic properties of Hermitian dual complex matrices.

\begin{lemma}\label{inverse and Hermitian}
Let $\widehat{A}=A+\varepsilon A_0\in\mathbb{DC}^{n\times n}$. Then the following statements hold.
\begin{enumerate}
    \item[$(i)$] $\widehat{A}$ is Hermitian if and only if both $A$ and $A_0$ are Hermitian.
    
    \item[$(ii)$] If $\widehat{A}$ is invertible, then $\widehat{A}$ is Hermitian if and only if $\widehat{A}^{-1}$ is Hermitian.
\end{enumerate}
\end{lemma}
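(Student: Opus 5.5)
Both parts reduce to comparing standard and infinitesimal parts, using that two dual matrices are equal exactly when both parts agree.

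For $(i)$, I compute $\widehat A^* = A^* + \varepsilon A_0^*$ from the definition of the conjugate transpose. Then $\widehat A^*=\widehat A$ holds if and only if $A^*+\varepsilon A_0^* = A+\varepsilon A_0$. Equating parts, this is equivalent to $A^*=A$ and $A_0^*=A_0$. Both directions are then immediate.

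For $(ii)$, I would use the algebraic identity $(\widehat A^{-1})^* = (\widehat A^*)^{-1}$. This follows by taking conjugate transposes in $\widehat A\widehat A^{-1}=\widehat A^{-1}\widehat A=I_n$ and using $(\widehat X\widehat Y)^*=\widehat Y^*\widehat X^*$. It shows $\widehat A^*$ is invertible with inverse $(\widehat A^{-1})^*$, by uniqueness of the inverse.
\begin{itemize}
\item If $\widehat A$ is Hermitian, then $(\widehat A^{-1})^*=(\widehat A^*)^{-1}=\widehat A^{-1}$.
\item Conversely, if $\widehat A^{-1}$ is Hermitian, apply the same argument to the invertible matrix $\widehat A^{-1}$, whose inverse is $\widehat A$. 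This gives $\widehat A^*=((\widehat A^{-1})^{-1})^*=((\widehat A^{-1})^*)^{-1}=(\widehat A^{-1})^{-1}=\widehat A$.
\end{itemize}

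As an alternative check via $(i)$, one could use the explicit formula $\widehat A^{-1}=A^{-1}-\varepsilon A^{-1}A_0A^{-1}$. If $A$ and $A_0$ are Hermitian, then $A^{-1}$ and $A^{-1}A_0A^{-1}$ are Hermitian. The converse recovers $A$ and $A_0$ from the parts of the inverse in the same way.

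There is no real obstacle. The only point needing care is justifying $(\widehat A^{-1})^*=(\widehat A^*)^{-1}$ over the dual ring, which rests solely on the reversal rule for conjugate transposes of products and the uniqueness of two-sided inverses, both recalled earlier.
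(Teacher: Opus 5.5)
Your proof is correct. Part $(i)$ is the same as the paper's, but your main argument for $(ii)$ takes a different route.

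The paper proves $(ii)$ entirely through $(i)$ and the explicit formula $\widehat A^{-1}=A^{-1}-\varepsilon A^{-1}A_0A^{-1}$, which is the route you list only as an ``alternative check''. It shows that $A^{-1}$ and $A^{-1}A_0A^{-1}$ are Hermitian when $A$ and $A_0$ are. For the converse, it gets Hermiticity of $A$ from $(A^{-1})^*=(A^*)^{-1}$. It then gets Hermiticity of $A_0$ from that of $A^{-1}A_0A^{-1}$ by conjugating with the now-Hermitian $A$.

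Your primary argument instead proves $(\widehat A^{-1})^*=(\widehat A^*)^{-1}$ directly at the dual level, using only the reversal rule for conjugate transposes and uniqueness of two-sided inverses. You then get the converse for free by applying the same identity to $\widehat A^{-1}$.

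What your route buys:
\begin{itemize}
\item It is shorter.
\item It does not need $(i)$, the inverse formula, or the fact that $\widehat A$ is invertible exactly when $A$ is.
\item It works unchanged wherever $*$ reverses products, for example matrices over other rings with involution.
\end{itemize}

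What the paper's route buys: it shows concretely how the standard and infinitesimal parts of $\widehat A^{-1}$ inherit Hermiticity. But its converse direction still uses $(A^{-1})^*=(A^*)^{-1}$, which is the same identity you use, applied only to the standard part.
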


\begin{proof}
$(i)$ By definition, \(\widehat A\) is Hermitian if and only if
$\widehat A^{*}=\widehat A.$
Since
$
\widehat A^{*}=(A+\varepsilon A_{0})^{*}=A^{*}+\varepsilon A_{0}^{*},
$
it follows that
\[
\widehat A^{*}=\widehat A
\quad\Longleftrightarrow\quad
A^{*}+\varepsilon A_{0}^{*}=A+\varepsilon A_{0}.
\]
By equality of the standard and infinitesimal parts, this is equivalent to
$
A^{*}=A
$ and $A_{0}^{*}=A_{0}$.
Hence \(\widehat A\) is Hermitian if and only if both \(A\) and \(A_{0}\) are Hermitian.

$(ii)$ We first prove the sufficiency. Since \(\widehat A^{-1}=A^{-1}-\varepsilon A^{-1}A_0A^{-1}\) is Hermitian, it follows from Lemma \ref{inverse and Hermitian} (i) that
\[
(A^{-1})^*=A^{-1}=(A^*)^{-1}.
\]
Hence \(A\) is Hermitian. Moreover, the infinitesimal part \(A^{-1}A_0A^{-1}\) is Hermitian as well. Since \(A^{-1}\) is Hermitian, it follows that \(A_0\) is Hermitian. Therefore, \(\widehat A=A+\varepsilon A_0\) is Hermitian.   

We next prove the necessity. Assume that \(\widehat A=A+\varepsilon A_0\) is Hermitian. Then, by Lemma \ref{inverse and Hermitian} (i),
\(A^*=A\) and \(A_0^*=A_0\).
Since \(\widehat A\) is invertible, and \(A\) is invertible. Thus
$(A^{-1})^*=(A^*)^{-1}=A^{-1},$
so \(A^{-1}\) is Hermitian. Moreover,
\[
(A^{-1}A_0A^{-1})^*
=(A^{-1})^*A_0^*(A^{-1})^*
=A^{-1}A_0A^{-1},
\]
which shows that \(A^{-1}A_0A^{-1}\) is Hermitian. Hence, \(\widehat A^{-1}\) is Hermitian.
\end{proof}

The following corollary gives explicit formulas for the DMPGI of dual complex matrices whose standard parts have full column or full row rank.

\begin{corollary}\label{lem:full-rank-DMPGI}
Let $\widehat A=A+\varepsilon A_0\in\mathbb{DC}^{m\times n}$.
Then the following statements hold.
\begin{itemize}
    \item[$(i)$] If the standard part $A$ is of full column rank, then $\widehat A^{*}\widehat A$ is invertible, and
    \[
    \begin{aligned}
    \widehat A^\dagger
    =(\widehat A^{*}\widehat A)^{-1}\widehat A^{*} =(A^{*}A)^{-1}A^{*}
    +\varepsilon\Big[
    (A^{*}A)^{-1}A_0^{*}
    -(A^{*}A)^{-1}
    (A^{*}A_0+A_0^{*}A)
    (A^{*}A)^{-1}A^{*}
    \Big].
    \end{aligned}
    \]

    \item[$(ii)$] If the standard part $A$ is of full row rank, then
    $\widehat A\widehat A^{*}$ is invertible, and
    \[
    \begin{aligned}
    \widehat A^\dagger
    =\widehat A^{*}
    (\widehat A\widehat A^{*})^{-1} =A^{*}(AA^{*})^{-1}
    +\varepsilon\Big[
    A_0^{*}(AA^{*})^{-1}
    -A^{*}(AA^{*})^{-1}
    (AA_0^{*}+A_0A^{*})
    (AA^{*})^{-1}
    \Big].
    \end{aligned}
    \]
\end{itemize}
\end{corollary}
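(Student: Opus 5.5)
We prove $(i)$ in detail; $(ii)$ then follows by applying $(i)$ to $\widehat A^{*}$ together with Lemma \ref{lem:DMPGI-properties}$(iii)$.

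\textbf{Invertibility of $\widehat A^{*}\widehat A$.} Suppose $A$ has full column rank. Expanding gives
\[
\widehat A^{*}\widehat A=A^{*}A+\varepsilon(A^{*}A_0+A_0^{*}A),
\]
whose standard part $A^{*}A$ is nonsingular. By the invertibility criterion recalled in Section~2, $\widehat A^{*}\widehat A$ is invertible, with
\[
(\widehat A^{*}\widehat A)^{-1}=(A^{*}A)^{-1}-\varepsilon (A^{*}A)^{-1}(A^{*}A_0+A_0^{*}A)(A^{*}A)^{-1}.
\]

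\textbf{The first equality.} Full column rank of $A$ gives $A^\dagger A=I_n$, so the compatibility condition \eqref{compatibility condition} holds trivially. Hence $\widehat A\in\mathbb{DC}_z^{m\times n}$. Lemma \ref{1.4} then yields $\widehat A^\dagger=(\widehat A^{*}\widehat A)^\dagger\widehat A^{*}$. For an invertible dual matrix the inverse satisfies the four Penrose equations, and the DMPGI is unique, so $(\widehat A^{*}\widehat A)^\dagger=(\widehat A^{*}\widehat A)^{-1}$.

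If one prefers not to go through Lemma \ref{1.4}, the same equality can be checked directly. Set $\widehat Y=(\widehat A^{*}\widehat A)^{-1}\widehat A^{*}$. Then:
\begin{itemize}
\item $\widehat Y\widehat A=I_n$, so the second and fourth Penrose equations are immediate.
\item $\widehat A\widehat Y\widehat A=\widehat A$.
\item $\widehat A\widehat Y=\widehat A(\widehat A^{*}\widehat A)^{-1}\widehat A^{*}$ is Hermitian, because $(\widehat A^{*}\widehat A)^{-1}$ is Hermitian by Lemma \ref{inverse and Hermitian}$(ii)$.
\end{itemize}

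\textbf{The explicit formula.} Multiply the expression for $(\widehat A^{*}\widehat A)^{-1}$ by $\widehat A^{*}=A^{*}+\varepsilon A_0^{*}$ and use $\varepsilon^2=0$. The standard part is $(A^{*}A)^{-1}A^{*}$. The infinitesimal part is
\[
(A^{*}A)^{-1}A_0^{*}-(A^{*}A)^{-1}(A^{*}A_0+A_0^{*}A)(A^{*}A)^{-1}A^{*},
\]
which is exactly the stated formula.

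\textbf{Part $(ii)$.} If $A$ has full row rank, then $A^{*}$ has full column rank. Applying $(i)$ to $\widehat A^{*}$ shows that $\widehat A\widehat A^{*}=(\widehat A^{*})^{*}\widehat A^{*}$ is invertible and gives $(\widehat A^{*})^\dagger$. By Lemma \ref{lem:DMPGI-properties}$(iii)$, $\widehat A^\dagger=((\widehat A^{*})^\dagger)^{*}$. Taking the conjugate transpose of the formula from $(i)$, using that $(AA^{*})^{-1}$ and $AA_0^{*}+A_0A^{*}$ are Hermitian, gives the stated formula in $(ii)$.

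There is no substantive obstacle in this argument. The one point that needs care is justifying $(\widehat A^{*}\widehat A)^\dagger=(\widehat A^{*}\widehat A)^{-1}$, which rests on uniqueness of the DMPGI. The rest is first-order bookkeeping with $\varepsilon^2=0$.
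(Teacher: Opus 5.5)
Your proof is correct and follows the paper's route: $\widehat A^{*}\widehat A$ is invertible because its standard part $A^{*}A$ is, you write down its first-order inverse, apply Lemma~\ref{1.4}, and expand using $\varepsilon^2=0$. You also justify two points the paper leaves implicit: that $\widehat A\in\mathbb{DC}_z^{m\times n}$ by the compatibility condition, and that $(\widehat A^{*}\widehat A)^\dagger=(\widehat A^{*}\widehat A)^{-1}$ by uniqueness; and you derive $(ii)$ from $(i)$ by conjugate transposition via Lemma~\ref{lem:DMPGI-properties}$(iii)$ rather than saying it follows similarly, which is a harmless variation.
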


\begin{proof}$(i)$
Since the standard part \(A\) is of full column rank. Thus, for any nonzero vector \(x\in\mathbb C^n\),
\[
x^*A^*Ax=(Ax)^*(Ax)=\|Ax\|^2>0.
\]
Hence \(A^*A\) is Hermitian positive definite, and therefore invertible. Then
\[
\widehat A^{\,*}\widehat A
=
(A^*+\varepsilon A_0^*)(A+\varepsilon A_0)
=
A^*A+\varepsilon(A^*A_0+A_0^*A).
\]
Since the standard part \(A^{*}A\) is invertible, the dual matrix \(\widehat A^{*}\widehat A\) is also invertible. It follows that
\[
(\widehat A^{*}\widehat A)^{-1}
=(A^{*}A)^{-1}
-\varepsilon (A^{*}A)^{-1}\bigl(A^{*}A_{0}+A_{0}^{*}A\bigr)(A^{*}A)^{-1}.
\]
Consequently, by Lemma \ref{1.4}, we have
$
\widehat A^\dagger
=(\widehat A^{*}\widehat A)^{-1}\widehat A^{*}.$

The proof of $(ii)$ follows similarly.
\end{proof}

Under the rank assumptions, the DMPGI of a product satisfies the following reverse order law.
\begin{corollary}\label{reverse order}
Let $\widehat U\in \mathbb{DC}^{m\times r}$ and $\widehat V\in \mathbb{DC}^{r\times n}$.
Assume that $U$ has full column rank and $V$ has full row rank. Then $(\widehat U\widehat V)^\dagger= \widehat V^\dagger \widehat U^\dagger.$
\end{corollary}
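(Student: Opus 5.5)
Since $U$ has full column rank and $V$ has full row rank, Corollary~\ref{lem:full-rank-DMPGI} applies to both factors. It gives
\[
\widehat U^\dagger=(\widehat U^*\widehat U)^{-1}\widehat U^*,\qquad
\widehat V^\dagger=\widehat V^*(\widehat V\widehat V^*)^{-1},
\]
with $\widehat U^*\widehat U$ and $\widehat V\widehat V^*$ invertible dual matrices. I will set $\widehat Y:=\widehat V^\dagger\widehat U^\dagger$ and check directly that $\widehat Y$ satisfies the four Penrose equations \eqref{defdmpgi} for $\widehat G=\widehat U\widehat V$. Uniqueness of the DMPGI then gives $(\widehat U\widehat V)^\dagger=\widehat Y$. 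This direct check avoids having to verify the compatibility condition \eqref{compatibility condition} for the product separately.

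The key identities are $\widehat U^\dagger\widehat U=(\widehat U^*\widehat U)^{-1}\widehat U^*\widehat U=I_r$ and $\widehat V\widehat V^\dagger=\widehat V\widehat V^*(\widehat V\widehat V^*)^{-1}=I_r$. Both use only associativity of dual matrix multiplication and the existence of the dual inverses.

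With these identities the four equations follow. First,
\[
\widehat G\widehat Y\widehat G=\widehat U(\widehat V\widehat V^\dagger)(\widehat U^\dagger\widehat U)\widehat V=\widehat U\widehat V .
\]
Second,
\[
\widehat Y\widehat G\widehat Y=\widehat V^\dagger(\widehat U^\dagger\widehat U)(\widehat V\widehat V^\dagger)\widehat U^\dagger=\widehat Y .
\]
Third, $\widehat G\widehat Y=\widehat U(\widehat V\widehat V^\dagger)\widehat U^\dagger=\widehat U\widehat U^\dagger$, which is Hermitian because $\widehat U^\dagger$ is a DMPGI. Fourth, $\widehat Y\widehat G=\widehat V^\dagger\widehat V$, which is Hermitian for the same reason.

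All four checks are formal once the one-sided inverse identities $\widehat U^\dagger\widehat U=I_r$ and $\widehat V\widehat V^\dagger=I_r$ are in hand. The only point needing care is that these are products of dual matrices, so I will rely on the invertibility statements from Corollary~\ref{lem:full-rank-DMPGI} rather than on any real rank argument. Alternatively, one could verify Hermitianity of $\widehat U\widehat U^\dagger$ explicitly as $\widehat U(\widehat U^*\widehat U)^{-1}\widehat U^*$, using Lemma~\ref{inverse and Hermitian}(ii) for the Hermitian inverse.
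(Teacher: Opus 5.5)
Your proof is correct and follows the same route as the paper: use Corollary~\ref{lem:full-rank-DMPGI} to obtain $\widehat U^\dagger\widehat U=I_r$ and $\widehat V\widehat V^\dagger=I_r$, check that $\widehat V^\dagger\widehat U^\dagger$ satisfies the four Penrose equations for $\widehat U\widehat V$, and conclude by uniqueness. You write out the four checks explicitly, which the paper leaves implicit.
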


\begin{proof}
Let $\widehat Z=\widehat V^\dagger \widehat U^\dagger.$
Since \( U\) has full column rank and \( V\) has full row rank, by Corollary \ref{lem:full-rank-DMPGI}, we have
$\widehat U^\dagger=(\widehat U^{*}\widehat U)^{-1}\widehat U^{*},$ and $\widehat V^\dagger=\widehat V^{*}(\widehat V\widehat V^{*})^{-1},$
then
$\widehat U^\dagger \widehat U=I_r$ and $\widehat V\widehat V^\dagger=I_r.$
Thus, combining the defining equations of the DMPGI in \eqref{defdmpgi}, we conclude that $\widehat Z$ is the DMPGI of $\widehat U\widehat V$.
\end{proof}

\section{DMPGIs of sums of dual complex matrices}
We first derive a representation for the DMPGI of
$[\,\widehat U\ \widehat V\,]$ in terms of its blocks. This representation is then used to obtain formulas for the DMPGIs of sums of dual complex matrices.
\begin{lemma}\label{ck}
Let $\widehat A=[\,\widehat U\ \ \widehat V\,]\in \mathbb{DC}_z^{m\times n},$ where \(\widehat U\in \mathbb{DC}_z^{m\times k}\), \(\widehat V\in \mathbb{DC}^{m\times (n-k)}\), and \(0\le k\le n\). Define $\widehat C=(I-\widehat U\widehat U^\dagger)\widehat V.$ Assume that \(\widehat C\in \mathbb{DC}_z^{m\times (n-k)}\) and that
\begin{align}\label{K}
\widehat K=
\Big[I+(I-\widehat C^\dagger \widehat C)\widehat V^{*}
(\widehat U^\dagger)^{*}\widehat U^\dagger
\widehat V(I-\widehat C^\dagger \widehat C)\Big]^{-1}
\end{align}
exists. Then $\widehat A^\dagger$ can be written in the form
\begin{align}\label{2.1}
\widehat A^\dagger=
\begin{bmatrix}
\widehat U^\dagger
-\widehat U^\dagger \widehat V\widehat C^\dagger
-\widehat U^\dagger \widehat V
(I-\widehat C^\dagger \widehat C)\widehat K
\widehat V^{*}(\widehat U^\dagger)^{*}\widehat U^\dagger
(I-\widehat V\widehat C^\dagger)
\\[1ex]
\widehat C^\dagger
+(I-\widehat C^\dagger \widehat C)\widehat K
\widehat V^{*}(\widehat U^\dagger)^{*}\widehat U^\dagger
(I-\widehat V\widehat C^\dagger)
\end{bmatrix}.
\end{align}
\end{lemma}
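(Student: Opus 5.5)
The plan is to take the right-hand side of \eqref{2.1} as a candidate $\widehat X=\begin{bmatrix}\widehat X_1\\ \widehat X_2\end{bmatrix}$ and verify the four Penrose equations \eqref{defdmpgi} for $\widehat A$ directly. Uniqueness of the DMPGI then gives $\widehat X=\widehat A^\dagger$. Put $\widehat P=\widehat U\widehat U^\dagger$ and $\widehat E=I-\widehat C^\dagger\widehat C$; both are Hermitian idempotents by \eqref{defdmpgi}.

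\emph{Step 1: auxiliary identities.}
\begin{itemize}
\item Since $\widehat U^\dagger\widehat P=\widehat U^\dagger$, we get $\widehat U^\dagger\widehat C=O$.
\item By Lemma \ref{1.4}, $\widehat C^\dagger=\widehat C^*(\widehat C\widehat C^*)^\dagger$ with $\widehat C^*=\widehat V^*(I-\widehat P)$. Hence $\widehat C^\dagger\widehat P=O$, and so $\widehat C^\dagger\widehat U=O$.
\item Writing $\widehat V=\widehat P\widehat V+\widehat C$ gives $\widehat C^\dagger\widehat V=\widehat C^\dagger\widehat C$ and $\widehat C\widehat C^\dagger\widehat V=\widehat C$.
\item From $\widehat C\widehat E=O$ we get $(I-\widehat P)\widehat V\widehat E=O$, i.e.\ $\widehat V\widehat E=\widehat P\widehat V\widehat E$.
\end{itemize}
Next set $\widehat M=\widehat E\widehat V^*(\widehat U^\dagger)^*\widehat U^\dagger\widehat V\widehat E$. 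This matrix is Hermitian and satisfies $\widehat E\widehat M=\widehat M\widehat E=\widehat M$. Therefore $\widehat E$ commutes with $I+\widehat M$, hence with $\widehat K=(I+\widehat M)^{-1}$. By Lemma \ref{inverse and Hermitian}$(ii)$, $\widehat K$ is Hermitian. Finally, $\widehat K\widehat M=\widehat M\widehat K=I-\widehat K$.

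\emph{Step 2: $\widehat A\widehat X$.} We have $\widehat A\widehat X=\widehat U\widehat X_1+\widehat V\widehat X_2$. The terms carrying $\widehat K$ combine into $(I-\widehat P)\widehat V\widehat E\widehat K(\cdots)$, which vanishes by Step 1. The remaining terms give
\[
\widehat A\widehat X=\widehat P+(I-\widehat P)\widehat V\widehat C^\dagger=\widehat P+\widehat C\widehat C^\dagger .
\]
This is Hermitian. Moreover, $(\widehat P+\widehat C\widehat C^\dagger)\widehat U=\widehat U$ and $(\widehat P+\widehat C\widehat C^\dagger)\widehat V=\widehat P\widehat V+\widehat C=\widehat V$, which proves $\widehat A\widehat X\widehat A=\widehat A$.

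\emph{Step 3: $\widehat X\widehat A$ blockwise.} Using $\widehat C^\dagger\widehat U=O$, $\widehat C^\dagger\widehat V=\widehat C^\dagger\widehat C$, $\widehat V-\widehat V\widehat C^\dagger\widehat V=\widehat V\widehat E$ and $\widehat E\widehat K\widehat M=\widehat E(I-\widehat K)$, I expect
\[
\widehat X_1\widehat U=\widehat U^\dagger\widehat U-\widehat U^\dagger\widehat V\widehat E\widehat K\widehat V^*(\widehat U^\dagger)^*,\quad
\widehat X_1\widehat V=\widehat U^\dagger\widehat V\widehat E\widehat K,\quad
\widehat X_2\widehat U=\widehat E\widehat K\widehat V^*(\widehat U^\dagger)^*,\quad
\widehat X_2\widehat V=I-\widehat E\widehat K .
\]
The diagonal blocks are Hermitian, because $\widehat E\widehat K=\widehat K\widehat E$ is Hermitian. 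The off-diagonal blocks are adjoint to each other. Hence $\widehat X\widehat A$ is Hermitian.

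\emph{Step 4: $\widehat X\widehat A\widehat X=\widehat X$.} It suffices to check $\widehat X(\widehat P+\widehat C\widehat C^\dagger)=\widehat X$. This follows from $\widehat U^\dagger\widehat C=O$ and $\widehat C^\dagger\widehat P=O$, together with the identity $(I-\widehat V\widehat C^\dagger)(\widehat P+\widehat C\widehat C^\dagger)=\widehat P+\widehat C\widehat C^\dagger-\widehat V\widehat C^\dagger$. Left-multiplying that identity by $\widehat U^\dagger$ returns $\widehat U^\dagger(I-\widehat V\widehat C^\dagger)$, so every term of $\widehat X$ is reproduced.

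The main obstacle is the bookkeeping in Step 3, in particular the collapse of $\widehat X_2\widehat V$ and $\widehat X_1\widehat V$. That collapse hinges on two facts: $\widehat E$ commutes with $\widehat K$, and $\widehat K\widehat M=I-\widehat K$. Every dual matrix entering these manipulations has an existing DMPGI by hypothesis, so the Penrose identities can be used freely.
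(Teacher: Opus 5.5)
Your proposal is correct and follows essentially the same route as the paper. Both verify the four Penrose equations for the same candidate, using $\widehat A\widehat X=\widehat U\widehat U^\dagger+\widehat C\widehat C^\dagger$ and the same block form of $\widehat X\widehat A$. Your $I-\widehat E\widehat K$ equals the paper's $\widehat C^\dagger\widehat C+I-\widehat K$, because $(I-\widehat E)\widehat K=I-\widehat E$. Your Step 4 checks $\widehat X(\widehat A\widehat X)=\widehat X$ rather than expanding $(\widehat X\widehat A)\widehat X$ blockwise, which is noticeably shorter than the paper's term-by-term cancellation.

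One small slip: $\widehat C^\dagger\widehat P=O$ follows from the other half of Lemma~\ref{1.4}, $\widehat C^\dagger=(\widehat C^*\widehat C)^\dagger\widehat C^*$, since $\widehat C^*\widehat P=\widehat V^*(I-\widehat P)\widehat P=O$. The form you quoted puts $\widehat C^*$ on the wrong side.
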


\begin{proof}
Let $E_{\widehat C}:=I-\widehat C^\dagger\widehat C$, $F_{\widehat C}:=I-\widehat C\widehat C^\dagger,$
and define
$\widehat R:=E_{\widehat C}\widehat K\widehat V^*
\widehat U^{\dagger *}\widehat U^\dagger(I-\widehat V\widehat C^\dagger).$
Then $\widehat A^\dagger$ admits the following more compact expression.
\[
\widehat G:=
\begin{bmatrix}
\widehat U^\dagger-\widehat U^\dagger \widehat V\widehat C^\dagger-\widehat U^\dagger\widehat V\widehat R
\\[1mm]
\widehat C^\dagger+\widehat R
\end{bmatrix}.
\]
It suffices to verify that \(\widehat G\) satisfies the four equations in \eqref{defdmpgi} for \(\widehat A\).

We begin with several identities that will be used repeatedly. From
$\widehat C=(I-\widehat U\widehat U^\dagger)\widehat V,$
we have
\[
\widehat U^\dagger\widehat C
=
\widehat U^\dagger(I-\widehat U\widehat U^\dagger)\widehat V
=O,
\qquad
\widehat C^\dagger\widehat U=O,
\qquad
\widehat C^\dagger\widehat V
=
\widehat C^\dagger(I-\widehat U\widehat U^\dagger)\widehat V
=
\widehat C^\dagger\widehat C.
\]
Since $E_{\widehat C}\widehat C^\dagger=O$ and
$
\widehat CE_{\widehat C}
=O,
$
it follows immediately that
\[
\widehat C\widehat R
=
\widehat C E_{\widehat C}\widehat K\widehat V^*
\widehat U^{\dagger *}\widehat U^\dagger(I-\widehat V\widehat C^\dagger)
=O.
\]
In addition, since $E_{\widehat C}$ is Hermitian idempotent and
$\widehat K^{-1}
=
I+E_{\widehat C}\widehat V^*
\widehat U^{\dagger *}\widehat U^\dagger\widehat VE_{\widehat C}
$ is also Hermitian,
we have $E_{\widehat C}\widehat K^{-1}=\widehat K^{-1}E_{\widehat C}.$
Then multiplying $\widehat K$ on both sides of the above equation, we have $E_{\widehat C}\widehat K=\widehat KE_{\widehat C}$, and further obtain
\begin{align*}
E_{\widehat C}\widehat V^*
\widehat U^{\dagger *}\widehat U^\dagger\widehat VE_{\widehat C}\widehat K
=
I-\widehat K=\widehat KE_{\widehat C}\widehat V^*
\widehat U^{\dagger *}\widehat U^\dagger\widehat VE_{\widehat C}=\widehat R\widehat V.
\end{align*}
We next verify the Penrose equations in \eqref{defdmpgi} for $\widehat G$. A direct calculation gives
\[
\begin{aligned}
\widehat A\widehat G
&=
\widehat U
\bigl(\widehat U^\dagger-\widehat U^\dagger \widehat V\widehat C^\dagger-\widehat U^\dagger\widehat V\widehat R\bigr)+
\widehat V(\widehat C^\dagger+\widehat R)
=
\widehat U\widehat U^\dagger-\widehat U\widehat U^\dagger\widehat V\widehat C^\dagger
-\widehat U\widehat U^\dagger\widehat V\widehat R
+\widehat V\widehat C^\dagger+\widehat V\widehat R.
\end{aligned}
\]
Since $\widehat V=(\widehat U\widehat U^\dagger)\widehat V+\widehat C$
and $\widehat C\widehat R=O$, we have 
$
\widehat V\widehat R=\widehat U\widehat U^\dagger\widehat V\widehat R.
$
Hence
\[
\widehat A\widehat G
=
\widehat U\widehat U^\dagger
-\widehat U\widehat U^\dagger\widehat V\widehat C^\dagger
+\widehat V\widehat C^\dagger
=
\widehat U\widehat U^\dagger
+(I-\widehat U\widehat U^\dagger)\widehat V\widehat C^\dagger
=
\widehat U\widehat U^\dagger+\widehat C\widehat C^\dagger.
\]
From both $\widehat U\widehat U^\dagger$ and $\widehat C\widehat C^\dagger$ are Hermitian, it follows that
\[
(\widehat A\widehat G)^*
=
(\widehat U\widehat U^\dagger)^*+(\widehat C\widehat C^\dagger)^*
=
\widehat U\widehat U^\dagger+\widehat C\widehat C^\dagger
=
\widehat A\widehat G.
\]
Moreover,
$\widehat A\widehat G\widehat A=
(\widehat U\widehat U^\dagger+\widehat C\widehat C^\dagger)[\,\widehat U\ \ \widehat V\,].$ Recall that $\widehat U\widehat U^\dagger\widehat U=\widehat U,$ $
\widehat C^\dagger\widehat U=O,$
and
$
\widehat U\widehat U^\dagger\widehat V+\widehat C\widehat C^\dagger\widehat V
=
\widehat U\widehat U^\dagger\widehat V+\widehat C
=
\widehat V,
$
Therefore,
\[
\widehat A\widehat G\widehat A=\widehat A.
\]
It remains to show that $\widehat G\widehat A\widehat G=\widehat G$ and $(\widehat G\widehat A)^*=\widehat G\widehat A$.
A direct block multiplication gives
\[
\widehat G\widehat A
=
\begin{bmatrix}
\widehat G_1\widehat U & \widehat G_1\widehat V\\[1mm]
\widehat G_2\widehat U & \widehat G_2\widehat V
\end{bmatrix},
\]
where
$
\widehat G_1=\widehat U^\dagger-\widehat U^\dagger \widehat V\widehat C^\dagger-\widehat U^\dagger\widehat V\widehat R,$ and $
\widehat G_2=\widehat C^\dagger+\widehat R.
$
Using the relations
\[
\widehat C^\dagger\widehat U=O,
\qquad
(I-\widehat V\widehat C^\dagger)\widehat U=\widehat U,
\qquad
(I-\widehat V\widehat C^\dagger)\widehat V=\widehat VE_{\widehat C},
\]
together with the definition of DMPGI, one obtains
\begin{align}\label{GA}
\widehat G\widehat A
=
\begin{bmatrix}
\widehat U^\dagger\widehat U-\widehat U^\dagger\widehat VE_{\widehat C}\widehat K\widehat V^*\widehat U^{\dagger *}
&
\widehat U^\dagger\widehat VE_{\widehat C}\widehat K
\\[1mm]
\widehat KE_{\widehat C}\widehat V^*\widehat U^{\dagger *}
&
\widehat C^\dagger\widehat C+I-\widehat K
\end{bmatrix}.
\end{align}
Since \(\widehat K^{-1}\) is Hermitian, Lemma \ref{inverse and Hermitian} yields
\(\widehat K\) is also Hermitian.
Recall that $\widehat U^\dagger\widehat U$ and $\widehat C^\dagger\widehat C$ are Hermitian, and
$E_{\widehat C}\widehat K=\widehat KE_{\widehat C}$, the above block representation shows that
\[
(\widehat G\widehat A)^*=\widehat G\widehat A.
\]
Furthermore, multiplying \eqref{GA} by $\widehat G$ on the right. Repeated use of the identities
\[
\widehat U^\dagger\widehat C=O,
\qquad
\widehat C\widehat R=O,
\qquad E_{\widehat C}\widehat K=\widehat KE_{\widehat C},\qquad
E_{\widehat C}\widehat C^\dagger=O,
\qquad
\widehat R= E_{\widehat C}\widehat R,
\]
together with
\[
E_{\widehat C}\widehat V^*
\widehat U^{\dagger *}\widehat U^\dagger\widehat V E_{\widehat C}\widehat K=\widehat KE_{\widehat C}\widehat V^*(\widehat U^\dagger)^*\widehat U^\dagger \widehat VE_{\widehat C}=I-\widehat K,
\]
shows that the upper block of $(\widehat G\widehat A)\widehat G$ reduces to $\widehat G_1$ as follows:
\[
\begin{aligned}
&\bigl(\widehat U^\dagger \widehat U-\widehat U^\dagger \widehat VE_{\widehat C}\widehat K\widehat V^{*}(\widehat U^\dagger)^{*}\bigr)\widehat G_1
+\widehat U^\dagger\widehat VE_{\widehat C}\widehat K\widehat G_2\\
={}&\bigl(\widehat U^\dagger \widehat U-\widehat U^\dagger \widehat VE_{\widehat C}\widehat K\widehat V^{*}(\widehat U^\dagger)^{*}\bigr)
\bigl(\widehat U^\dagger-\widehat U^\dagger\widehat V\widehat C^\dagger-\widehat U^\dagger\widehat V\widehat R\bigr)
+\widehat U^\dagger\widehat V E_{\widehat C}\widehat K(\widehat C^\dagger+\widehat R)\\
={}&\widehat U^\dagger
-\widehat U^\dagger\widehat V\widehat C^\dagger
-\widehat U^\dagger\widehat V\widehat R
-\widehat U^\dagger\widehat VE_{\widehat C}\widehat K\widehat V^*(\widehat U^\dagger)^*\widehat U^\dagger(I-\widehat V\widehat C^\dagger)
+\widehat U^\dagger\widehat VE_{\widehat C}\widehat K\widehat V^*(\widehat U^\dagger)^*\widehat U^\dagger\widehat V\widehat R\\
&
+\widehat U^\dagger\widehat V E_{\widehat C}\widehat K\widehat C^\dagger
+\widehat U^\dagger\widehat V E_{\widehat C}\widehat K\widehat R\\
={}&\widehat U^\dagger
-\widehat U^\dagger\widehat V\widehat C^\dagger
-\widehat U^\dagger\widehat V\widehat R
-\widehat U^\dagger\widehat V\widehat R
+\widehat U^\dagger\widehat VE_{\widehat C}\widehat K\widehat V^*(\widehat U^\dagger)^*\widehat U^\dagger\widehat V\widehat R
+\widehat U^\dagger\widehat V E_{\widehat C}\widehat K\widehat R\\
={}&\widehat U^\dagger
-\widehat U^\dagger\widehat V\widehat C^\dagger
-\widehat U^\dagger\widehat V\widehat R
-\widehat U^\dagger\widehat V\widehat R
+\widehat U^\dagger\widehat V(I-\widehat K)\widehat R
+\widehat U^\dagger\widehat V\widehat K\widehat R\\
={}&\widehat U^\dagger
-\widehat U^\dagger\widehat V\widehat C^\dagger
-\widehat U^\dagger\widehat V\widehat R\\
={}&\widehat G_1.
\end{aligned}
\]
Recall that \[
\widehat C\widehat R=O,
\qquad
\widehat C^\dagger\widehat C\widehat C^\dagger=\widehat C^\dagger,
\qquad
(I-\widehat K)\widehat C^\dagger=O,
\qquad
\widehat R=E_{\widehat C}\widehat R,\]
the lower block of $(\widehat G\widehat A)\widehat G$ reduces to $\widehat G_2$ as follows:
\[
\begin{aligned}
&\widehat KE_{\widehat C}\widehat V^*(\widehat U^\dagger)^*\widehat G_1
+\bigl(\widehat C^\dagger\widehat C+I-\widehat K\bigr)\widehat G_2\\
={}&\widehat KE_{\widehat C}\widehat V^*(\widehat U^\dagger)^*
\bigl(\widehat U^\dagger-\widehat U^\dagger\widehat V\widehat C^\dagger-\widehat U^\dagger\widehat V\widehat R\bigr)
+\bigl(\widehat C^\dagger\widehat C+I-\widehat K\bigr)(\widehat C^\dagger+\widehat R)\\
={}&\widehat KE_{\widehat C}\widehat V^*(\widehat U^\dagger)^*\widehat U^\dagger(I-\widehat V\widehat C^\dagger)
-\widehat KE_{\widehat C}\widehat V^*(\widehat U^\dagger)^*\widehat U^\dagger\widehat V\widehat R
+\widehat C^\dagger\widehat C\widehat C^\dagger
\\&+\widehat C^\dagger\widehat C\widehat R+(I-\widehat K)\widehat C^\dagger+(I-\widehat K)\widehat R\\
={}&\widehat R
-\widehat KE_{\widehat C}\widehat V^*(\widehat U^\dagger)^*\widehat U^\dagger\widehat VE_{\widehat C}\widehat R
+\widehat C^\dagger+O+O+(I-\widehat K)\widehat R\\
={}&\widehat R-(I-\widehat K)\widehat R+\widehat C^\dagger+(I-\widehat K)\widehat R\\
={}&\widehat C^\dagger+\widehat R\\
={}&\widehat G_2.
\end{aligned}
\]
Consequently,
$
\widehat G\widehat A\widehat G=\widehat G$. Thus $\widehat G$ satisfies all four Penrose equations for $\widehat A$. By the uniqueness of the DMPGI, we conclude that
$\widehat G=\widehat A^\dagger.
$ This completes the proof.
\end{proof}

Combining the preceding block representation with the DMPGI properties yields the following formula.
\begin{theorem}\label{uuvv}
    Let $\widehat U,\widehat V$ be any dual complex matrices and let $\widehat C$ and $\widehat K$ be defined as in \eqref{K}. Assume that all dual matrices involved in the following expression possess DMPGIs. Then the DMPGI of $\widehat U\widehat U^*+\widehat V\widehat V^*$ can be written in the form
    \begin{align*}
        (\widehat U\widehat U^*+\widehat V\widehat V^*)^\dagger=
\big(I-(\widehat C^\dagger)^*\widehat V^{*}\big)\,(\widehat U^\dagger)^*
\Bigl[I-\widehat U^{\dagger}\widehat V
(I-\widehat C^{\dagger}\widehat C)\widehat K\widehat V^{*}(\widehat U^\dagger)^*
\Bigr]\widehat U^{\dagger}
(I-\widehat V\widehat C^{\dagger})
+(\widehat C^\dagger)^*
\widehat C^{\dagger}.
\end{align*}
\end{theorem}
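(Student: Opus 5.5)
Set $\widehat A=[\,\widehat U\ \ \widehat V\,]$, so that $\widehat A\widehat A^*=\widehat U\widehat U^*+\widehat V\widehat V^*$. The plan is to reduce everything to Lemma \ref{ck}. By hypothesis $\widehat A$, $\widehat U$ and $\widehat C$ admit DMPGIs and $\widehat K$ exists, so Lemma \ref{ck} gives $\widehat A^\dagger=\begin{bmatrix}\widehat G_1\\ \widehat G_2\end{bmatrix}$. Here
\[
\widehat G_1=\widehat U^\dagger(I-\widehat V\widehat C^\dagger)-\widehat U^\dagger\widehat V\widehat R,\qquad
\widehat G_2=\widehat C^\dagger+\widehat R,
\]
with $\widehat R=E_{\widehat C}\widehat K\widehat V^*(\widehat U^\dagger)^*\widehat U^\dagger(I-\widehat V\widehat C^\dagger)$.

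Lemma \ref{lem:DMPGI-properties}(ii) then gives $(\widehat A\widehat A^*)^\dagger=(\widehat A^\dagger)^*\widehat A^\dagger=\widehat G_1^*\widehat G_1+\widehat G_2^*\widehat G_2$. The rest of the proof is a simplification of this sum of two Hermitian products into the stated form.

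For the simplification, write $\widehat W:=\widehat U^\dagger(I-\widehat V\widehat C^\dagger)$ and $\widehat M:=\widehat V^*(\widehat U^\dagger)^*$. Then $\widehat R=E_{\widehat C}\widehat K\widehat M\widehat W$, $\widehat G_1=(I-\widehat M^*E_{\widehat C}\widehat K\widehat M)\widehat W$, and $\widehat G_2=\widehat C^\dagger+\widehat R$. We use the following facts from the proof of Lemma \ref{ck}:
\begin{itemize}
\item $E_{\widehat C}$ is Hermitian idempotent;
\item $\widehat K$ is Hermitian and commutes with $E_{\widehat C}$;
\item $E_{\widehat C}\widehat C^\dagger=O$;
\item $E_{\widehat C}\widehat M\widehat M^*E_{\widehat C}\widehat K=I-\widehat K$.
\end{itemize}

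Expand the two products separately.

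\emph{Cross terms.} In $\widehat G_2^*\widehat G_2$ the cross terms $(\widehat C^\dagger)^*\widehat R+\widehat R^*\widehat C^\dagger$ vanish, because $\widehat R=E_{\widehat C}\widehat R$ and $E_{\widehat C}\widehat C^\dagger=O$. Hence $\widehat G_2^*\widehat G_2=(\widehat C^\dagger)^*\widehat C^\dagger+\widehat R^*\widehat R$, and the first summand is the last term of the target formula.

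\emph{Collecting the rest.} The remaining terms are $\widehat W^*\widehat N\widehat W$, where $\widehat N=(I-\widehat P)^2+\widehat P'$. Here $\widehat P:=\widehat M^*E_{\widehat C}\widehat K\widehat M$ is Hermitian, and $\widehat P'=\widehat M^*\widehat K E_{\widehat C}\widehat K\widehat M$ comes from $\widehat R^*\widehat R$.

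\emph{Key identity.} Compute
\[
\widehat P^2=\widehat M^*E_{\widehat C}\widehat K(E_{\widehat C}\widehat M\widehat M^*E_{\widehat C}\widehat K)\widehat M=\widehat M^*E_{\widehat C}\widehat K(I-\widehat K)\widehat M=\widehat P-\widehat P'.
\]
Therefore $\widehat N=I-2\widehat P+\widehat P^2+\widehat P'=I-\widehat P$.

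\emph{Assembling the target.} The sum becomes
\[
\widehat W^*(I-\widehat M^*E_{\widehat C}\widehat K\widehat M)\widehat W+(\widehat C^\dagger)^*\widehat C^\dagger.
\]
Substitute $\widehat W^*=(I-(\widehat C^\dagger)^*\widehat V^*)(\widehat U^\dagger)^*$ and $\widehat M^*=\widehat U^\dagger\widehat V$. This gives exactly the bracket $I-\widehat U^\dagger\widehat V(I-\widehat C^\dagger\widehat C)\widehat K\widehat V^*(\widehat U^\dagger)^*$ in the statement.

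The main obstacle is the identity $\widehat P^2=\widehat P-\widehat P'$. It needs care in moving $E_{\widehat C}$ past $\widehat K$ and in applying $E_{\widehat C}^2=E_{\widehat C}$ so that the relation $E_{\widehat C}\widehat M\widehat M^*E_{\widehat C}\widehat K=I-\widehat K$ appears. All the other steps are routine bookkeeping with conjugate transposes, using the Hermitian properties of $\widehat K$ (Lemma \ref{inverse and Hermitian}) and of the projectors.
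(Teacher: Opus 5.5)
Your proposal is correct and follows the paper's proof essentially step for step: both write $(\widehat U\widehat U^*+\widehat V\widehat V^*)^\dagger=(\widehat A^\dagger)^*\widehat A^\dagger$ for $\widehat A=[\,\widehat U\ \ \widehat V\,]$ and insert the block form of Lemma~\ref{ck}. Both then drop the cross terms in the lower block via $E_{\widehat C}\widehat C^\dagger=O$ and use $E_{\widehat C}\widehat V^*(\widehat U^\dagger)^*\widehat U^\dagger\widehat V E_{\widehat C}\widehat K=I-\widehat K$ to collapse the quadratic term. Your identity $\widehat P^2=\widehat P-\widehat P'$ is precisely the paper's computation of $\widehat M^*\widehat M$ in \eqref{eq:dual-2.6}, where the paper's $\widehat M$ is your $I-\widehat P$ and its $\widehat N$ is your $\widehat R$, and the cancellation against $\widehat R^*\widehat R$ is the same.
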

\begin{proof}
Let $\widehat A=[\,\widehat U \ \ \widehat V\,].$
By Lemma \ref{lem:DMPGI-properties}, we have
\begin{equation}\label{dual-2.2}
(\widehat U\widehat U^{*}+\widehat V\widehat V^{*})^{\dagger}
=
(\widehat A\widehat A^{*})^{\dagger}
=
(\widehat A^*)^\dagger\widehat A^{\dagger}=(\widehat A^\dagger)^*\widehat A^{\dagger}.
\end{equation}
The representation of $(\widehat U\widehat U^{*}+\widehat V\widehat V^{*})^{\dagger}$ is obtained by substituting the expression of $\widehat A^{\dagger}$ from \eqref{2.1} into the right-hand side of \eqref{dual-2.2} and then carrying out block multiplication. For this purpose, define
\begin{equation}\label{eq:dual-2.3M}
\widehat M
=
I-\widehat U^{\dagger}\widehat V
\bigl(I-\widehat C^{\dagger}\widehat C\bigr)
\widehat K\,\widehat V^{*}(\widehat U^\dagger)^*,
\end{equation}
and
\begin{equation*}\label{eq:dual-2.3N}
\widehat N
=
\bigl(I-\widehat C^{\dagger}\widehat C\bigr)
\widehat K\,\widehat V^{*}(\widehat U^\dagger)^*\widehat U^{\dagger}
\bigl(I-\widehat V\widehat C^{\dagger}\bigr),
\end{equation*}
so that \eqref{2.1} can be rewritten as
\begin{equation}\label{eq:dual-2.4}
\widehat A^{\dagger}
=
\begin{bmatrix}
\widehat M\widehat U^{\dagger}(I-\widehat V\widehat C^{\dagger})\\[1mm]
\widehat C^{\dagger}+\widehat N
\end{bmatrix}.
\end{equation}
By the defining equations of the DMPGI, $I-\widehat C^{\dagger}\widehat C$ is Hermitian and $\bigl(I-\widehat C^{\dagger}\widehat C\bigr)\widehat C^{\dagger}=O.$ Consequently,
$\widehat N^{*}\widehat C^{\dagger}=O,$
and therefore
\begin{equation}\label{eq:dual-2.5}
(\widehat C^{\dagger}+\widehat N)^{*}(\widehat C^{\dagger}+\widehat N)
=
(\widehat C^\dagger)^*\widehat C^{\dagger}
+
\widehat N^{*}\widehat N.
\end{equation}
Next, since $I-\widehat C^{\dagger}\widehat C$ is idempotent and commutes with the dual Hermitian matrix $\widehat K$, we have
\[
\begin{aligned}
\widehat M^{*}\widehat M
&=I-2\widehat U^{\dagger}\widehat V
\bigl(I-\widehat C^{\dagger}\widehat C\bigr)
\widehat K\widehat V^{*}(\widehat U^\dagger)^*
\\&+\widehat U^{\dagger}\widehat V
\bigl(I-\widehat C^{\dagger}\widehat C\bigr)
\widehat K
\bigl(I-\widehat C^{\dagger}\widehat C\bigr)
\widehat V^{*}(\widehat U^\dagger)^*
\widehat U^{\dagger}\widehat V
\bigl(I-\widehat C^{\dagger}\widehat C\bigr)
\widehat K\widehat V^{*}(\widehat U^\dagger)^*.
\end{aligned}
\]
Using the defining relation of $\widehat K$,
\[
\bigl(I-\widehat C^{\dagger}\widehat C\bigr)
\widehat V^{*}(\widehat U^\dagger)^*\widehat U^{\dagger}\widehat V
\bigl(I-\widehat C^{\dagger}\widehat C\bigr)\widehat K
=
I-\widehat K,
\]
it follows that
\begin{equation}\label{eq:dual-2.6}
\widehat M^{*}\widehat M
=I-\widehat U^{\dagger}\widehat V
\bigl(I-\widehat C^{\dagger}\widehat C\bigr)
\widehat K\widehat V^{*}(\widehat U^\dagger)^*
-
\widehat U^{\dagger}\widehat V
\bigl(I-\widehat C^{\dagger}\widehat C\bigr)
\widehat K^{2}\widehat V^{*}(\widehat U^\dagger)^*.
\end{equation}

Observe that multiplying the last term in \eqref{eq:dual-2.6} on the left by
$\bigl(I-(\widehat C^\dagger)^*\widehat V^{*}\bigr)(\widehat U^\dagger)^*$
and on the right by $\widehat U^{\dagger}(I-\widehat V\widehat C^{\dagger})$
yields $-\widehat N^{*}\widehat N$. Hence, combining \eqref{eq:dual-2.4}, \eqref{eq:dual-2.5}, and \eqref{eq:dual-2.6}, we obtain
\[
(\widehat A^\dagger)^*\widehat A^{\dagger}
=
\bigl(I-(\widehat C^\dagger)^*\widehat V^{*}\bigr)
(\widehat U^\dagger)^*\widehat M\widehat U^{\dagger}
\bigl(I-\widehat V\widehat C^{\dagger}\bigr)
+
(\widehat C^\dagger)^*\widehat C^{\dagger}.
\]
Substituting the expression of $\widehat M$ from \eqref{eq:dual-2.3M} into the above identity, and using \eqref{dual-2.2}, we obtain
\[
(\widehat U\widehat U^{*}+\widehat V\widehat V^{*})^{\dagger}
=
\bigl(I-(\widehat C^\dagger)^*\widehat V^{*}\bigr)(\widehat U^\dagger)^*
\Bigl[
I-\widehat U^{\dagger}\widehat V
\bigl(I-\widehat C^{\dagger}\widehat C\bigr)\widehat K
\widehat V^{*}(\widehat U^\dagger)^*
\Bigr]
\widehat U^{\dagger}\bigl(I-\widehat V\widehat C^{\dagger}\bigr)
+(\widehat C^\dagger)^*\widehat C^{\dagger}.
\]
This completes the proof.
\end{proof}

Under the condition $\widehat U\widehat V^*=O$,
the preceding result yields the following representation for $(\widehat U+\widehat V)^\dagger$.
\begin{theorem}\label{U+VUV*}
Let \(\widehat U,\widehat V\in\mathbb{DC}^{m\times n}\) satisfy
$\widehat U\widehat V^{\,*}=O.$
Assume that \(\widehat U\) and
$\widehat\Delta=\bigl(I-\widehat U\widehat U^\dagger\bigr)\widehat V
$ admit the DMPGI. Then
\[
(\widehat U+\widehat V)^\dagger
=
\widehat U^\dagger
+
\bigl(I-\widehat U^\dagger \widehat V\bigr)
\Big[
\widehat\Delta^\dagger
+
\bigl(I-\widehat\Delta^\dagger\widehat\Delta\bigr)\widehat\Phi\,
\widehat V^{\,*}(\widehat U^\dagger)^*
\widehat U^\dagger
\bigl(I-\widehat V\widehat\Delta^\dagger\bigr)
\Big],
\]
where
$
\widehat\Phi=
\Big[I+
\bigl(I-\widehat\Delta^\dagger\widehat\Delta\bigr)
\widehat V^{\,*}
(\widehat U^\dagger)^{*}
\widehat U^\dagger
\widehat V
\bigl(I-\widehat\Delta^\dagger\widehat\Delta\bigr)
\Big]^{-1}.
$
\end{theorem}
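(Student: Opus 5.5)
The plan is to reduce the statement to Lemma \ref{ck} through the identity $(\widehat U+\widehat V)(\widehat U+\widehat V)^*=\widehat A\widehat A^*$, where $\widehat A=[\,\widehat U\ \ \widehat V\,]\in\mathbb{DC}^{m\times 2n}$. Since $\widehat U\widehat V^*=O$ (and hence $\widehat V\widehat U^*=O$), we have $\widehat X\widehat X^*=\widehat U\widehat U^*+\widehat V\widehat V^*=\widehat A\widehat A^*$ for $\widehat X:=\widehat U+\widehat V=\widehat A\begin{bmatrix} I\\ I\end{bmatrix}$. With $k=n$ we have $\widehat C=\widehat\Delta$, and $\widehat\Phi$ is exactly the matrix $\widehat K$ of \eqref{K}.

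\textbf{Step 1: hypotheses of Lemma \ref{ck}.} We must check that $\widehat\Phi$ exists, i.e.\ that its argument is invertible. Its standard part is $I+E W E$, where $E=I-\Delta^\dagger\Delta$ is a Hermitian projector and $W=V^*(U^\dagger)^*U^\dagger V$ is positive semidefinite. Thus $I+EWE$ is positive definite and hence invertible, so the dual matrix is invertible.

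The lemma also assumes $\widehat A\in\mathbb{DC}_z$. I do not want to assume this a priori. I will note that the proof of Lemma \ref{ck} only verifies the four Penrose equations for the explicit matrix $\widehat G$, using the existence of $\widehat U^\dagger$, $\widehat C^\dagger$ and $\widehat K$. Hence $\widehat A^\dagger$ exists and equals $\widehat G=\begin{bmatrix}\widehat G_1\\ \widehat G_2\end{bmatrix}$.

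\textbf{Step 2: existence of $\widehat X^\dagger$ and its formula.} Set $\widehat Y:=[\,I\ \ I\,]\widehat A^\dagger$ and verify the Penrose equations for $\widehat X$ directly.
\begin{itemize}
\item[(a)] We have $\widehat X\widehat Y=\widehat A\begin{bmatrix} I\\ I\end{bmatrix}[\,I\ \ I\,]\widehat A^\dagger$. This is not obviously $\widehat A\widehat A^\dagger$, so I would avoid this route.
\item[(b)] Instead, write $\widehat Y=\widehat X^*(\widehat A^\dagger)^*\widehat A^\dagger$. This holds because $\widehat X^*(\widehat A^\dagger)^*=[\,I\ \ I\,](\widehat A^\dagger\widehat A)^*=[\,I\ \ I\,]\widehat A^\dagger\widehat A$, and $\widehat A^\dagger\widehat A\widehat A^\dagger=\widehat A^\dagger$.
\item[(c)] By Lemma \ref{lem:DMPGI-properties}(ii), $(\widehat A^\dagger)^*\widehat A^\dagger=(\widehat A\widehat A^*)^\dagger=(\widehat X\widehat X^*)^\dagger$. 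Hence
$\widehat X\widehat Y=\widehat A\widehat A^*(\widehat A\widehat A^*)^\dagger=\widehat A\widehat A^\dagger$, which is Hermitian. The last equality follows from $\widehat A^*(\widehat A^\dagger)^*=(\widehat A^\dagger\widehat A)^*=\widehat A^\dagger\widehat A$.
\item[(d)] From $\widehat A\widehat A^\dagger\widehat A=\widehat A$ we get $\widehat A\widehat A^\dagger\widehat U=\widehat U$ and $\widehat A\widehat A^\dagger\widehat V=\widehat V$, so $\widehat X\widehat Y\widehat X=\widehat X$.
\item[(e)] Next, $\widehat Y\widehat X=\widehat X^*(\widehat A^\dagger)^*\widehat A^\dagger\widehat X$ is manifestly Hermitian.
\item[(f)] Finally, $\widehat Y\widehat X\widehat Y=\widehat Y(\widehat A\widehat A^\dagger)=\widehat X^*(\widehat A^\dagger)^*\widehat A^\dagger\widehat A\widehat A^\dagger=\widehat Y$.
\end{itemize}
By uniqueness of the DMPGI, $\widehat X^\dagger=\widehat Y=\widehat G_1+\widehat G_2$.

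\textbf{Step 3: simplification.} Adding the two blocks of \eqref{2.1} with $\widehat C=\widehat\Delta$ and $\widehat K=\widehat\Phi$, and writing $\widehat R$ as in the proof of Lemma \ref{ck}, gives
\[
\widehat U^\dagger-\widehat U^\dagger\widehat V\widehat\Delta^\dagger-\widehat U^\dagger\widehat V\widehat R+\widehat\Delta^\dagger+\widehat R=\widehat U^\dagger+(I-\widehat U^\dagger\widehat V)(\widehat\Delta^\dagger+\widehat R).
\]
This is exactly the claimed formula.

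The main obstacle is Step 2: showing that the DMPGI of $\widehat U+\widehat V$ exists at all, since it is not among the hypotheses. I would handle it by the direct Penrose verification above rather than through the compatibility condition \eqref{compatibility condition}. The delicate points are the identity $\widehat X^*(\widehat A^\dagger)^*=[\,I\ \ I\,]\widehat A^\dagger\widehat A$ and the use of Lemma \ref{lem:DMPGI-properties}(ii) to turn $(\widehat A^\dagger)^*\widehat A^\dagger$ into $(\widehat X\widehat X^*)^\dagger$.
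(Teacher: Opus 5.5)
Your proposal is correct and follows the paper's route: you write $\widehat A=[\,\widehat U\ \ \widehat V\,]$, use $(\widehat U+\widehat V)(\widehat U+\widehat V)^*=\widehat A\widehat A^*$ to identify $(\widehat U+\widehat V)^\dagger$ with the sum of the two block rows of $\widehat A^\dagger$, and then read off Lemma \ref{ck} with $\widehat C=\widehat\Delta$ and $\widehat K=\widehat\Phi$. The paper obtains that identification by applying Lemma \ref{1.4} to $\widehat U+\widehat V$ and to $\widehat A$, which tacitly assumes both DMPGIs already exist; your direct Penrose check of $[\,I\ \ I\,]\widehat A^\dagger$, together with the positive-definiteness argument for $\widehat\Phi$ and the remark that Lemma \ref{ck}'s proof constructs $\widehat A^\dagger$, actually proves these existence claims rather than presupposing them.
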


\begin{proof}
Assume that $\widehat A=[\widehat U \ \ \widehat V]$ and $\widehat U\widehat V^*=O$, then $$\widehat A\widehat A^*=\widehat U\widehat U^*+\widehat V\widehat V^*=(\widehat U+\widehat V)(\widehat U+\widehat V)^*,$$
and it follows from Lemma \ref{1.4} that
\begin{align}\label{n1n2}
    (\widehat U+\widehat V)^\dagger&=(\widehat U+\widehat V)^*(\widehat U\widehat U^*+\widehat V\widehat V^*)^\dagger=\widehat U^*(\widehat U\widehat U^*+\widehat V\widehat V^*)^\dagger+\widehat V^*(\widehat U\widehat U^*+\widehat V\widehat V^*)^\dagger.
\end{align}
From Theorem \ref{uuvv}, we have a general form for $(\widehat U\widehat U^*+\widehat V\widehat V^*)^\dagger$ which can be substituted directly into \eqref{n1n2}. Alternatively, note that applying Lemma \ref{1.4} to the partitioned dual matrix $\widehat A=[\widehat U \ \ \widehat V]$ gives
\begin{align}\label{3.2}
    \widehat A^\dagger=
\widehat A^{*}(\widehat A\widehat A^{*})^{\dagger}
=
\begin{bmatrix}
\widehat U^{*}(\widehat U\widehat U^{*}+\widehat V\widehat V^{*})^{\dagger}\\[1mm]
\widehat V^{*}(\widehat U\widehat U^{*}+\widehat V\widehat V^{*})^{\dagger}
\end{bmatrix}.
\end{align}
Since $\widehat A^\dagger$ is unique, submatrices in \eqref{2.1} and \eqref{3.2} must be equal. The proof is complete.
\end{proof}

\section{Numerical examples}
This section presents two numerical examples to illustrate the preceding theorems. The first example illustrates Theorem~\ref{uuvv}.
\begin{example}\label{ex:uuvv}
Let $\varepsilon$ be the dual unit satisfying
$\varepsilon^2=0$, and consider the dual matrices
\[
\widehat U=
\begin{bmatrix}
1+\varepsilon&0\\
0&0\\
0&0
\end{bmatrix},
\qquad
\widehat V=
\begin{bmatrix}
2+4\varepsilon&0\\
1+3\varepsilon&1+3\varepsilon\\
\varepsilon&\varepsilon
\end{bmatrix}.
\]
For the matrices above, all DMPGIs and inverses required
in Theorem~\ref{uuvv} exist.
We then apply Theorem \ref{uuvv} to compute
$(\widehat U\widehat U^*+\widehat V\widehat V^*)^\dagger$,
beginning with $\widehat U^\dagger$ and
$\widehat C^\dagger$.

Since $(1+\varepsilon)^{-1}=1-\varepsilon$, we have
\[
\widehat U^\dagger=
\begin{bmatrix}
1-\varepsilon&0&0\\
0&0&0
\end{bmatrix},
\qquad
I-\widehat U\widehat U^\dagger=
\begin{bmatrix}
0&0&0\\
0&1&0\\
0&0&1
\end{bmatrix}.
\]
Consequently,
\[
\widehat C
=(I-\widehat U\widehat U^\dagger)\widehat V
=
\begin{bmatrix}
0&0\\
1+3\varepsilon&1+3\varepsilon\\
\varepsilon&\varepsilon
\end{bmatrix}.
\]
The DMPGI of $\widehat C$ is given by
\begin{equation}\label{C+}
\widehat C^\dagger=
\begin{bmatrix}
0&\frac12-\frac32\varepsilon&\frac12\varepsilon\\[1mm]
0&\frac12-\frac32\varepsilon&\frac12\varepsilon
\end{bmatrix}.
\end{equation}
Indeed,
\[
\widehat C\widehat C^\dagger=
\begin{bmatrix}
0&0&0\\
0&1&\varepsilon\\
0&\varepsilon&0
\end{bmatrix}=(\widehat C\widehat C^\dagger)^*,
\qquad
\widehat C^\dagger\widehat C
=\frac12
\begin{bmatrix}
1&1\\
1&1
\end{bmatrix}=(\widehat C^\dagger\widehat C)^*,
\]
and $\widehat C\widehat C^\dagger\widehat C=\widehat C,~\widehat C^\dagger\widehat C\widehat C^\dagger=\widehat C^\dagger.$
Thus, the matrix in \eqref{C+} satisfies all four Penrose equations in \eqref{defdmpgi} and is therefore the DMPGI of $\widehat C$. In particular,
\[
I-\widehat C^\dagger\widehat C
=\frac12
\begin{bmatrix}
1&-1\\
-1&1
\end{bmatrix}.
\]
To compute $\widehat K$, we first obtain
\[
\widehat U^\dagger\widehat V
=
\begin{bmatrix}
2+2\varepsilon&0\\
0&0
\end{bmatrix},\qquad (\widehat U^\dagger\widehat V)^*
=
\begin{bmatrix}
2+2\varepsilon&0\\
0&0
\end{bmatrix}
\]
and hence
\[
\widehat V^*(\widehat U^\dagger)^*
\widehat U^\dagger\widehat V
=
\begin{bmatrix}
4+8\varepsilon&0\\
0&0
\end{bmatrix}.
\]
Therefore,
\[
\begin{aligned}
(I-\widehat C^\dagger\widehat C)
\widehat V^*(\widehat U^\dagger)^*
\widehat U^\dagger\widehat V
(I-\widehat C^\dagger\widehat C)
=
\frac14
\begin{bmatrix}
1&-1\\
-1&1
\end{bmatrix}
\begin{bmatrix}
4+8\varepsilon&0\\
0&0
\end{bmatrix}
\begin{bmatrix}
1&-1\\
-1&1
\end{bmatrix}
=
(1+2\varepsilon)
\begin{bmatrix}
1&-1\\
-1&1
\end{bmatrix}.
\end{aligned}
\]
Since $\widehat K=
\Big[I+(I-\widehat C^\dagger \widehat C)\widehat V^{*}
(\widehat U^\dagger)^{*}\widehat U^\dagger
\widehat V(I-\widehat C^\dagger \widehat C)\Big]^{-1}$, it follows that
\[
\begin{aligned}
\widehat K
=
\begin{bmatrix}
2+2\varepsilon&-1-2\varepsilon\\
-1-2\varepsilon&2+2\varepsilon
\end{bmatrix}^{-1}
=
\frac{1}{3+4\varepsilon}
\begin{bmatrix}
2+2\varepsilon&1+2\varepsilon\\
1+2\varepsilon&2+2\varepsilon
\end{bmatrix}
=
\begin{bmatrix}
\frac23-\frac29\varepsilon&
\frac13+\frac29\varepsilon\\[1mm]
\frac13+\frac29\varepsilon&
\frac23-\frac29\varepsilon
\end{bmatrix},
\end{aligned}
\]
Moreover,
\[
(I-\widehat C^\dagger\widehat C)\widehat K
=
\frac{1}{3+4\varepsilon}
(I-\widehat C^\dagger\widehat C),
\]
and
\[
\begin{aligned}
\widehat U^\dagger\widehat V
(I-\widehat C^\dagger\widehat C)
\widehat V^*(\widehat U^\dagger)^*
=
\frac12
\begin{bmatrix}
2+2\varepsilon&0\\
0&0
\end{bmatrix}
\begin{bmatrix}
1&-1\\
-1&1
\end{bmatrix}
\begin{bmatrix}
2+2\varepsilon&0\\
0&0
\end{bmatrix}
=
\begin{bmatrix}
2+4\varepsilon&0\\
0&0
\end{bmatrix}.
\end{aligned}
\]
Consequently,
\[
\begin{aligned}
&\widehat U^\dagger\widehat V
(I-\widehat C^\dagger\widehat C)
\widehat K\widehat V^*(\widehat U^\dagger)^*
=
\frac{1}{3+4\varepsilon}
\begin{bmatrix}
2+4\varepsilon&0\\
0&0
\end{bmatrix}
=
\begin{bmatrix}
\frac23+\frac49\varepsilon&0\\
0&0
\end{bmatrix}.
\end{aligned}
\]
The matrix inside the square brackets in
Theorem~\ref{uuvv} is therefore
\[
I-\widehat U^\dagger\widehat V
(I-\widehat C^\dagger\widehat C)
\widehat K\widehat V^*(\widehat U^\dagger)^*
=
\begin{bmatrix}
\frac13-\frac49\varepsilon&0\\
0&1
\end{bmatrix}.
\]
Then,
\[
\widehat V\widehat C^\dagger=
\begin{bmatrix}
0&1-\varepsilon&\varepsilon\\
0&1&\varepsilon\\
0&\varepsilon&0
\end{bmatrix},\qquad I-\widehat V\widehat C^\dagger
=
\begin{bmatrix}
1&-1+\varepsilon&-\varepsilon\\
0&0&-\varepsilon\\
0&-\varepsilon&1
\end{bmatrix}.
\]
It follows that
\[
\widehat U^\dagger
(I-\widehat V\widehat C^\dagger)
=
\begin{bmatrix}
1-\varepsilon&-1+2\varepsilon&-\varepsilon\\
0&0&0
\end{bmatrix},
\]
by taking conjugate transposes,
\[
\bigl(I-(\widehat C^\dagger)^*\widehat V^*\bigr)
(\widehat U^\dagger)^*
=
\begin{bmatrix}
1-\varepsilon&0\\
-1+2\varepsilon&0\\
-\varepsilon&0
\end{bmatrix}.
\]
Thus, the first summand on the right-hand side of
Theorem~\ref{uuvv} is
\[
\begin{aligned}
&\bigl(I-(\widehat C^\dagger)^*\widehat V^*\bigr)
(\widehat U^\dagger)^*
\times
\Bigl[
I-\widehat U^\dagger\widehat V
(I-\widehat C^\dagger\widehat C)
\widehat K\widehat V^*(\widehat U^\dagger)^*
\Bigr]
\widehat U^\dagger(I-\widehat V\widehat C^\dagger)
\\
&=
\left(\frac13-\frac49\varepsilon\right)
\begin{bmatrix}
1-\varepsilon\\
-1+2\varepsilon\\
-\varepsilon
\end{bmatrix}
\begin{bmatrix}
1-\varepsilon&-1+2\varepsilon&-\varepsilon
\end{bmatrix}
\\
&=
\begin{bmatrix}
\frac13&-\frac13&0\\
-\frac13&\frac13&0\\
0&0&0
\end{bmatrix}
+\varepsilon
\begin{bmatrix}
-\frac{10}{9}&\frac{13}{9}&-\frac13\\
\frac{13}{9}&-\frac{16}{9}&\frac13\\
-\frac13&\frac13&0
\end{bmatrix}.
\end{aligned}
\]
The second summand is
\[
\begin{aligned}
(\widehat C^\dagger)^*\widehat C^\dagger
&=
2
\begin{bmatrix}
0\\
\frac12-\frac32\varepsilon\\
\frac12\varepsilon
\end{bmatrix}
\begin{bmatrix}
0&\frac12-\frac32\varepsilon&\frac12\varepsilon
\end{bmatrix}
=
\begin{bmatrix}
0&0&0\\
0&\frac12&0\\
0&0&0
\end{bmatrix}
+\varepsilon
\begin{bmatrix}
0&0&0\\
0&-3&\frac12\\
0&\frac12&0
\end{bmatrix}.
\end{aligned}
\]
Combining the above expressions gives
\begin{equation*}
(\widehat U\widehat U^*
+\widehat V\widehat V^*)^\dagger
=
\begin{bmatrix}
\frac13&-\frac13&0\\
-\frac13&\frac56&0\\
0&0&0
\end{bmatrix}
+\varepsilon
\begin{bmatrix}
-\frac{10}{9}&\frac{13}{9}&-\frac13\\
\frac{13}{9}&-\frac{43}{9}&\frac56\\
-\frac13&\frac56&0
\end{bmatrix}.
\end{equation*}
\end{example}

We next illustrate Theorem~\ref{U+VUV*} by an example in which the orthogonality condition is satisfied and the correction matrix
$\widehat\Phi$ is nontrivial.
\begin{example}\label{ex:U+VUV*}
Let $\varepsilon$ be the dual unit satisfying
$\varepsilon^2=0$, and consider the dual matrices
\[
\widehat U
=(1+\varepsilon)
\begin{bmatrix}
1&0&0\\
1&0&0\\
0&0&0
\end{bmatrix},
\qquad
\widehat V
=(1+\varepsilon)
\begin{bmatrix}
0&-1&-1\\
0&-1&-1\\
0&-1&0
\end{bmatrix}.
\]
A direct calculation gives $\widehat U\widehat V^*=O.$ Moreover,
\[
\widehat U^\dagger
=(1-\varepsilon)
\begin{bmatrix}
\frac12&\frac12&0\\
0&0&0\\
0&0&0
\end{bmatrix},
\qquad
I-\widehat U\widehat U^\dagger
=
\begin{bmatrix}
\frac12&-\frac12&0\\
-\frac12&\frac12&0\\
0&0&1
\end{bmatrix}.
\]
It follows that
\[
\widehat\Delta
=(I-\widehat U\widehat U^\dagger)\widehat V
=(1+\varepsilon)
\begin{bmatrix}
0&0&0\\
0&0&0\\
0&-1&0
\end{bmatrix}.
\]
Therefore,
\[
\widehat\Delta^\dagger
=(1-\varepsilon)
\begin{bmatrix}
0&0&0\\
0&0&-1\\
0&0&0
\end{bmatrix},
\qquad
I-\widehat\Delta^\dagger\widehat\Delta
=
\begin{bmatrix}
1&0&0\\
0&0&0\\
0&0&1
\end{bmatrix}.
\]
Using the definition of $\widehat\Phi$ in
Theorem~\ref{U+VUV*}, we obtain
\[
\begin{aligned}
\widehat\Phi
=
\Big[
I+
(I-\widehat\Delta^\dagger\widehat\Delta)
\widehat V^*
(\widehat U^\dagger)^*
\widehat U^\dagger
\widehat V
(I-\widehat\Delta^\dagger\widehat\Delta)
\Big]^{-1}=
\begin{bmatrix}
1&0&0\\
0&1&0\\
0&0&\frac12
\end{bmatrix}.
\end{aligned}
\]
Substituting the above expressions into
Theorem~\ref{U+VUV*} yields
\[
(\widehat U+\widehat V)^\dagger
=
(1-\varepsilon)
\begin{bmatrix}
\frac14&\frac14&-\frac12\\
0&0&-1\\
-\frac14&-\frac14&\frac12
\end{bmatrix}.
\]
\end{example}

\section{Rigid-body motion estimation}

Rigid-body motion estimation arises in point-based registration, motion tracking, and computer-assisted surgery. The effect of
point configuration on registration accuracy has been studied in \cite{Fitzpatrick1998,West2001}. Here we consider the recovery of angular and translational velocities from the velocities of
two observation points. Placing these points symmetrically about the reference point yields a rank-deficient system in which rotation about the line joining them is unobservable. This
configuration provides an explicit example of the application of Theorem~\ref{uuvv} to motion recovery.

Let $\boldsymbol{\omega}\in\mathbb{R}^3$ denote the angular velocity of the rigid body, and let $\boldsymbol v\in\mathbb{R}^3$ denote the
velocity of a reference point fixed in the body. If $\boldsymbol r_j\in\mathbb{R}^3$ is the position of the $j$-th observation point relative to this reference point, then its
velocity is
\[
\boldsymbol b_j
=
\boldsymbol v+\boldsymbol\omega\times\boldsymbol r_j
=
\boldsymbol v-[\boldsymbol r_j]_\times\boldsymbol\omega,
\]
where
\[
[\boldsymbol r_j]_\times
=
\begin{bmatrix}
0&-r_{j,3}&r_{j,2}\\
r_{j,3}&0&-r_{j,1}\\
-r_{j,2}&r_{j,1}&0
\end{bmatrix}.
\]
Here $\boldsymbol v$ is the velocity of the reference point, while $\boldsymbol\omega\times\boldsymbol r_j$ is the velocity induced by the rotation of the rigid body about that point.

For a normalized collinear configuration, take the reference point as
the origin and let the two observation points be located symmetrically
on the $x$-axis,
\[
\boldsymbol r_1=
\begin{bmatrix}
1&
0&
0
\end{bmatrix}^T,
\qquad
\boldsymbol r_2=
\begin{bmatrix}
-1&
0&
0
\end{bmatrix}^T.
\]
Stacking the two velocity equations gives
\[
\begin{bmatrix}
\boldsymbol b_1\\
\boldsymbol b_2
\end{bmatrix}
=
\begin{bmatrix}
-[\boldsymbol r_1]_\times & I_3\\
-[\boldsymbol r_2]_\times & I_3
\end{bmatrix}
\begin{bmatrix}
\boldsymbol\omega\\
\boldsymbol v
\end{bmatrix}.
\]
Thus,
\[
\boldsymbol b=A\boldsymbol\xi,
\qquad
\boldsymbol b=
\begin{bmatrix}
\boldsymbol b_1\\
\boldsymbol b_2
\end{bmatrix},
\qquad
\boldsymbol\xi=
\begin{bmatrix}
\boldsymbol\omega\\
\boldsymbol v
\end{bmatrix},
\qquad
A=[\,U\ V\,],
\]
where
\[
U=
\begin{bmatrix}
-[\boldsymbol r_1]_\times\\
-[\boldsymbol r_2]_\times
\end{bmatrix}=\begin{bmatrix}
0&0&0\\
0&0&1\\
0&-1&0\\
0&0&0\\
0&0&-1\\
0&1&0
\end{bmatrix},
\qquad
V=
\begin{bmatrix}
I_3\\
I_3
\end{bmatrix}=\begin{bmatrix}
1&0&0\\
0&1&0\\
0&0&1\\
1&0&0\\
0&1&0\\
0&0&1
\end{bmatrix}.
\]
Here $U$ and $V$ are the rotational and translational block columns, respectively. Hence
\[
A=
\begin{bmatrix}
0&0&0&1&0&0\\
0&0&1&0&1&0\\
0&-1&0&0&0&1\\
0&0&0&1&0&0\\
0&0&-1&0&1&0\\
0&1&0&0&0&1
\end{bmatrix}.
\]
Let
\[
\boldsymbol\omega=(\omega_x,\omega_y,\omega_z)^T,
\qquad
\boldsymbol v=(v_x,v_y,v_z)^T.
\]
The velocities of the two observation points are
\[
\boldsymbol b_1=
\begin{bmatrix}
v_x\\
v_y+\omega_z\\
v_z-\omega_y
\end{bmatrix},
\qquad
\boldsymbol b_2=
\begin{bmatrix}
v_x\\
v_y-\omega_z\\
v_z+\omega_y
\end{bmatrix}.
\]
A direct calculation gives
\[
\operatorname{rank}(A)=5,
\qquad
\mathcal N(A)
=
\operatorname{span}
\left\{
\begin{bmatrix}
1&0&0&0&0&0
\end{bmatrix}^T
\right\}.
\]
This null space corresponds to rotation about the $x$-axis.
Since $\boldsymbol r_1$ and $\boldsymbol r_2$ lie on this axis,
\[
(\boldsymbol\omega+\alpha e_1)\times\boldsymbol r_j
=
\boldsymbol\omega\times\boldsymbol r_j,
\qquad j=1,2,\quad \alpha\in\mathbb R.
\]
Thus, adding an arbitrary $w_x$ keeps the velocity induced by the rotation of the rigid body about that point fixed and leaves the observed point velocities unchanged. This component is therefore unobservable, and the motion vector cannot be uniquely determined from the velocity data.

We next incorporate a first-order perturbation in the observation geometry. Let
\[
\widehat{\boldsymbol r}_1
=(1+\varepsilon)\boldsymbol r_1,
\qquad
\widehat{\boldsymbol r}_2
=(1+\varepsilon)\boldsymbol r_2,
\qquad
\varepsilon^2=0.
\]
This perturbation represents a first-order change in the positions of the two observation points along the $x$-axis while preserving the
rank-deficient collinear geometry. The corresponding dual coefficient matrix is 
\[
\widehat A
=
[\,\widehat U\ \widehat V\,],
\qquad
\widehat U=(1+\varepsilon)U,
\qquad
\widehat V=V.
\]
Since the standard part of $\widehat A$ is the rank deficient matrix
$A$, Corollary~\ref{lem:full-rank-DMPGI} is not satisfied. Nevertheless,
Lemma~\ref{1.4} gives
\begin{align}\label{AAA}
\widehat A^\dagger
=
\widehat A^*(\widehat A\widehat A^*)^\dagger.
\end{align}
Thus, it suffices to determine the DMPGI of the associated matrix
$\widehat A\widehat A^*$. From the column partition of $\widehat A$, we have
\[
\widehat A\widehat A^*
=
\widehat U\widehat U^*
+
\widehat V\widehat V^*,
\]
which is precisely the form covered by Theorem~\ref{uuvv}. 

Since $\widehat U^*\widehat V=0$, Lemma~\ref{1.4} implies
$\widehat U^\dagger\widehat V=0$. Consequently,
\[
\widehat C
=
(I-\widehat U\widehat U^\dagger)\widehat V
=
\widehat V=V,
\qquad
\widehat C^\dagger\widehat C=I_3,
\qquad
I_3-\widehat C^\dagger\widehat C=O,
\qquad
\widehat K=I_3.
\]
All DMPGIs required below exist for this configuration.
The representation in Theorem~\ref{uuvv} therefore reduces to
\begin{align}\label{aa*+}
(\widehat A\widehat A^*)^\dagger
=
(\widehat U^\dagger)^*\widehat U^\dagger
+
(\widehat V^\dagger)^*\widehat V^\dagger.
\end{align}
Writing
$\widehat A\widehat A^*=G+\varepsilon G_0,$
we have
\[
G
=
\begin{bmatrix}
1&0&0&1&0&0\\
0&2&0&0&0&0\\
0&0&2&0&0&0\\
1&0&0&1&0&0\\
0&0&0&0&2&0\\
0&0&0&0&0&2
\end{bmatrix},
\qquad
G_0=
\begin{bmatrix}
0&0&0&0&0&0\\
0&2&0&0&-2&0\\
0&0&2&0&0&-2\\
0&0&0&0&0&0\\
0&-2&0&0&2&0\\
0&0&-2&0&0&2
\end{bmatrix}.
\]
Here $\operatorname{rank}(G)=5$. Since
$\widehat U=(1+\varepsilon)U$, $\widehat V=V$, and
$\varepsilon^2=0$, we obtain
\[
\widehat U^\dagger=(1-\varepsilon)U^\dagger=(1-\varepsilon)(U^{*}U)^{\dagger}U^{*}=\frac{1-\varepsilon}{2}
\begin{bmatrix}
0 & 0 & 0  & 0 & 0  & 0 \\
0 & 0 & -1 & 0 & 0  & 1 \\
0 & 1 & 0  & 0 & -1 & 0
\end{bmatrix},
\]
and
\[
\widehat V^\dagger=V^\dagger=(V^{*}V)^{-1}V^{*}=\frac{1}{2}
\begin{bmatrix}
1 & 0 & 0 & 1 & 0 & 0 \\
0 & 1 & 0 & 0 & 1 & 0 \\
0 & 0 & 1 & 0 & 0 & 1
\end{bmatrix}.
\]
Substitution into \eqref{aa*+} yields
\begin{align*}
(\widehat A\widehat A^*)^\dagger
&=
(1-2\varepsilon)(U^\dagger)^*U^\dagger
+
(V^\dagger)^*V^\dagger\\&=[(U^\dagger)^*U^\dagger+(V^\dagger)^*V^\dagger]
+\varepsilon[-2(U^\dagger)^*U^\dagger]\\&:=G^\dagger+\varepsilon \Gamma_G,
\end{align*}
where
\[
G^\dagger
=
\begin{bmatrix}
\frac14&0&0&\frac14&0&0\\
0&\frac12&0&0&0&0\\
0&0&\frac12&0&0&0\\
\frac14&0&0&\frac14&0&0\\
0&0&0&0&\frac12&0\\
0&0&0&0&0&\frac12
\end{bmatrix},\qquad
\Gamma_G
=
\begin{bmatrix}
0&0&0&0&0&0\\
0&-\frac12&0&0&\frac12&0\\
0&0&-\frac12&0&0&\frac12\\
0&0&0&0&0&0\\
0&\frac12&0&0&-\frac12&0\\
0&0&\frac12&0&0&-\frac12
\end{bmatrix}.
\]
From \eqref{AAA}, it follows that
\begin{align}\label{Adagger}
\widehat A^\dagger
=\widehat A^*(\widehat A\widehat A^*)^\dagger=
\frac12
\begin{bmatrix}
0&0&0&0&0&0\\
0&0&-1&0&0&1\\
0&1&0&0&-1&0\\
1&0&0&1&0&0\\
0&1&0&0&1&0\\
0&0&1&0&0&1
\end{bmatrix}
+
\frac{\varepsilon}{2}
\begin{bmatrix}
0&0&0&0&0&0\\
0&0&1&0&0&-1\\
0&-1&0&0&1&0\\
0&0&0&0&0&0\\
0&0&0&0&0&0\\
0&0&0&0&0&0
\end{bmatrix}.
\end{align}

To examine motion recovery in this configuration, consider
\[
\boldsymbol\xi=
\begin{bmatrix}
1&2&3&4&5&6
\end{bmatrix}^{T}.
\]
Keeping the motion fixed, we generate the corresponding dual velocity as
\[
\widehat{\boldsymbol b}
=
\widehat A\boldsymbol\xi
=
\begin{bmatrix}
4&8&4&4&2&8
\end{bmatrix}^{T}
+
\varepsilon
\begin{bmatrix}
0&3&-2&0&-3&2
\end{bmatrix}^{T}.
\]
Using the expression for $\widehat A^\dagger$ given in \eqref{Adagger}, we obtain
the following motion estimate
\[
\widehat{\boldsymbol\xi}_{\mathrm{rec}}
=
\widehat A^\dagger\widehat{\boldsymbol b}
=
\begin{bmatrix}
0&2&3&4&5&6
\end{bmatrix}^{T}.
\]
This estimate reproduces the measurements exactly:
\[
\widehat A\widehat{\boldsymbol\xi}_{\mathrm{rec}}
=
\widehat{\boldsymbol b}.
\]

The first component cannot be determined from the measurements, since
\[
\widehat A
\begin{bmatrix}
1&0&0&0&0&0
\end{bmatrix}^{T}
=
O.
\]
Thus, every motion of the form
\[
\begin{bmatrix}
1+\alpha&2&3&4&5&6
\end{bmatrix}^{T},
\qquad \alpha\in\mathbb R,
\]
produces the same measurements. Rotation about the $x$-axis is therefore unobservable in this configuration. The zero first component of
$\widehat{\boldsymbol\xi}_{\mathrm{rec}}$ does not imply that the actual angular velocity about the $x$-axis vanishes.

\section*{Funding}
\begin{itemize}
\item Daochang Zhang is supported by the National Natural Science Foundation of China (NSFC) (No. 11901079), China Postdoctoral Science Foundation (No. 2021M700751), and the Scientific and Technological Research Program Foundation of Jilin Province (No. JJKH20190690KJ; No. JJKH20220091KJ; No. JJKH20250851KJ).
\end{itemize}

\section*{Conflict of Interest}
The authors declare that they have no potential conflict of interest.

\section*{Data Availability}
Data sharing not applicable to this article as no datasets were generated or analyzed during the current study.


\bibliographystyle{abbrv}

\end{document}